\documentclass[12pt]{article}

\usepackage[utf8]{inputenc}
\usepackage[left=2cm, right=2cm,, top=3cm, centering]{geometry}
\usepackage{hyperref}
\usepackage{amsmath}
\usepackage{amsfonts}
\usepackage{amssymb}
\usepackage{amsthm}
\usepackage{mathtools}
\usepackage{stmaryrd} \usepackage[
	scr=boondoxo,
]{mathalpha}
\usepackage{mathabx}
\usepackage{graphicx}
\usepackage{enumitem}
\usepackage{ulem}
\usepackage{aligned-overset}
\usepackage{mdframed}

\usepackage{natbib}

\usepackage[draft]{fixme}
\fxsetup{theme=color}
\allowdisplaybreaks[4]

\usepackage{hyperref}
\hypersetup{
	colorlinks=true,       linkcolor=blue,         
	citecolor=red}

\DeclareMathOperator{\cov}{Cov}

\newcommand*{\feature}{\phi}

\DeclarePairedDelimiter{\scp}{\langle}{\rangle}
\DeclarePairedDelimiter{\norm}{\|}{\|}
\DeclarePairedDelimiter{\holder}{[}{]}
\DeclarePairedDelimiter{\set}{\{}{\}}
\DeclarePairedDelimiter{\abs}{|}{|}
\DeclarePairedDelimiter{\floor}{\lfloor}{\rfloor}
\DeclarePairedDelimiter{\ceil}{\lceil}{\rceil}

\makeatletter
\newcounter{constants}
 \newcommand*{\defConst}[1]{\stepcounter{constants}\edef\const@temp{\arabic{constants}}\hypertarget{const:#1}{}\immediate\write\@auxout{\string\newlabel{const: #1}{{\noexpand\ensuremath{c_{\const@temp}}}{\thepage}{\noexpand\ensuremath{c_{\const@temp}}}{const:#1}{}}}\ensuremath{c_{\const@temp}}}
\makeatother

\newcommand*{\shiftL}[2]{{#1}_{#2}}
\newcommand*{\basis}{e}

\newcommand*{\real}{\mathbb{R}}

\newcommand{\Z}{\mathbb{Z}}
\newcommand{\integer}{\mathbb{Z}}

\newcommand*{\hilbert}{\mathcal{H}}
\newcommand{\nat}{\mathbb{N}}

\renewcommand*{\Pr}{\mathbb{P}}
\newcommand{\E}{\mathbb{E}}

\DeclarePairedDelimiterXPP{\Exp}[1]{\E}{[}{]}{}{#1}
\DeclareMathOperator{\var}{Var}
\newcommand*{\normal}{\mathcal{N}}
\newcommand*{\fdd}{\mathrm{f.d.d.}}

\newcommand*{\hermRank}{q}
\newcommand*{\bm}{B}

\DeclarePairedDelimiterXPP{\vect}[1]{\mathrm{vec}}{(}{)}{}{#1}
\DeclarePairedDelimiterXPP{\prob}[1]{\mathsf{P}}{\{}{\}}{}{#1}

\newcommand*{\slowVar}{\ell}

\newcommand*{\interpF}{F}

\newcommand*{\hurst}{H}

\newlist{steps}{enumerate}{1}
\setlist[steps]{
    label=\textbf{Step \arabic*:},
    ref={Step \arabic*},
    wide=0pt,
}
\newlist{casebycase}{enumerate}{1}
\setlist[casebycase]{
    label=\textbf{Case \arabic*},
    ref={\arabic*},
    wide=0pt,
} 
\theoremstyle{plain}
\newtheorem{theorem}{Theorem}[section]
\newtheorem*{theorem*}{Theorem}

\newtheorem*{proposition*}{Proposition}
\newtheorem{lemma}[theorem]{Lemma}

\theoremstyle{definition}
\newtheorem{definition}[theorem]{Definition}
\newtheorem*{definition*}{Definition}
\newtheorem{remark}[theorem]{Remark}
\newtheorem*{remark*}{Remark}

\title{
	Functional Scaling Limits of Interpolated Correlated Random Walks in Hölder Topology
}
\author{Felix Benning \and Ivan Nourdin}
\date{\today}

\begin{document}

\maketitle

\begin{abstract}
We prove functional scaling limits for interpolated random walks whose
increments are functions of a stationary Gaussian sequence. In this setting,
the classical Dobrushin--Major--Taqqu theorem describes the
scaling limit when the covariance has a regularly varying, non-summable
tail, while the Breuer--Major theorem describes the limit in the
summable regime. We strengthen these convergence results to functional
convergence in Hölder topology and, in the summable regime, in rough Hölder
topology. These stronger topologies are useful because many operations on
paths, such as Young integration and solution maps of differential
equations, are continuous in (rough) Hölder topology, but not in
Skorokhod topology. \end{abstract}
\section{Introduction}

Rescaled random walks with independent increments of finite variance converge to
the Brownian motion. If the increments are dependent, the limiting behavior may
change considerably. A classical way to produce such dependent increments is to
start from a stationary Gaussian
sequence \((X_k)_{k\in\nat_0}\) and consider the partial sums of \(\feature(X_k)\) for
a function \(\feature\). In the case of summable correlations one obtains Brownian limits by the classic Breuer--Major theorem 
\citep{breuerCentralLimitTheorems1983,nourdinFunctionalBreuerMajor2020,altmanRoughFunctionalBreuerMajor2026},
while regularly varying non-summable correlations lead to possibly non-Gaussian limits
by the Dobrushin--Major--Taqqu theorem
\citep{taqquWeakConvergenceFractional1975,dobrushinNonCentralLimitTheorems1979}.
These results are known in finite-dimensional distributions and in Skorokhod-type topologies.

Our motivation stems from work in progress on
the infinite-depth limit of residual
neural networks. In this setting, the random walk appears as a discrete driving
signal in an Euler-type discretization of a differential equation. Passing from
the discrete model to the limiting differential equation requires convergence in
a topology for which the relevant integral and solution maps are continuous.
Skorokhod convergence alone does not provide this continuity, motivating recent
work on strengthening functional limit theorems to the (rough) Hölder topology \citep{gehringerStochasticHomogenizationFastSlow2022,altmanRoughFunctionalBreuerMajor2026}.
We provide such a strengthening here for interpolated random walks.

The main focus of this paper is the non-summable regime, where the
limits of the interpolated and Euler-type random walks agree. 
But the proof technique we develop for this purpose also yields a short
proof of a rough-path Breuer--Major theorem for the interpolated random walk in
the summable regime. Because the interpolation is piecewise linear, the second
level converges to the Stratonovich enhancement of Brownian motion. This
complements the recent rough functional Breuer--Major theorem for the
non-interpolated Euler-type random walk, whose second-level limit is of Itô type
with a correction term
\citep{altmanRoughFunctionalBreuerMajor2026}.

\section{Scaling limits of interpolated correlated random walks}

Let \((X_k)_{k\in \nat_0}\) be a stationary sequence of Gaussian random variables
with zero mean and correlation
\(
    \rho(k) \coloneq \cov(X_k, X_0)
\)
with \(\rho(0)=1\).
Using a feature function \(\feature\colon \real\to \real\) with \(\Exp{\feature(X_0)} = 0\),
we define the interpolated random walk
\begin{equation}
    \label{eq: definition random walk interpolation}    
    \interpF^n_t
    \coloneq
    \sum_{i=0}^{\floor{nt}-1} \feature(X_i)
    +  \underbrace{(n t - \floor{n t}) \feature(X_{\floor{n t}})}_{\text{linear interpolation}}
\end{equation}

Observe that with the feature function \(\feature\) we may remove the assumption
of unit variance. But this assumption is crucial for the definition
of the Hermite rank (Definition \ref{def: Hermite rank}). 
Recall that in the case of independent \(X_k\), the classical Donsker's theorem
demands a scaling factor. Specifically, \(\frac1{\sqrt{n}}\interpF_t^n\) converges
against the Brownian motion. In the case of correlated \(X_k\), the
scaling factor that ensures non-degenerate limits depends on the decay rate of the
covariance \(\rho(k)\) and the ``Hermite rank'' of the feature function
\(\feature\).

\begin{definition}[Hermite rank]
    \label{def: Hermite rank}
    Let \(\gamma(dx) = \frac1{\sqrt{2\pi}} \exp(-\frac{x^2}{2}) dx\) be the standard Gaussian measure.
    The Hermite rank \(\hermRank = \hermRank(\feature)\) of a function \(\feature \in L^2(\gamma)\) is defined as
    the smallest integer \(\hermRank \in \nat_0\) such that \(c_\hermRank \neq 0\)
    in the expansion of \(\feature\) in terms of the Hermite polynomials \(H_k\):
    \begin{equation}
        \label{eq: Hermite expansion}    
        \feature(x) = \sum_{k=0}^\infty c_k H_k(x).
    \end{equation}
    Equivalently, the Hermite rank is given by \(\hermRank = \hermRank(\feature) \coloneq \min\set{k\in \nat_0: \scp{\feature, H_k}_{L^2(\gamma)} \neq 0}\).
\end{definition}
\begin{remark}[The typical Hermite rank is one or maybe two]
    Since we assume \(\Exp{\feature(X_0)} = 0\) for the feature function \(\feature\) of the
    the random walk, we have \(\scp{\feature, H_0}_{L^2(\gamma)} = 0\) and thus \(\hermRank \ge 1\).
    And since the first Hermite polynomial is the identity \(H_1(x) = x\), the
    absence of a feature function \(\feature\) implies
    \(\hermRank = 1\). Higher order Hermite ranks are very
    unstable:
    \begin{itemize}
        \item\emph{Instability with respect to shifts.} If the function \(\feature\) is not a.e.\ constant,
        then \(\tilde \feature(x) \coloneq \feature(x+c)\) has Hermite rank \(\hermRank
        = 1\) for almost all \(c\in \real\) \citep[Thm.~2.1]{baiSensitivityHermiteRank2019}.
        \item\emph{Instability with respect to scaling.}
        If \(\feature\) is not a.e.\ symmetric, then
        \(\tilde{\feature}(x) \coloneq \feature(cx)\) has 
        Hermite rank to \(\hermRank =1\) for almost all \(c\in
        \real\) \citep[Thm.~2.2]{baiSensitivityHermiteRank2019}.
        \item\emph{Symmetric case.} If \(\feature\neq 0\) is symmetric,
        then \(\hermRank \ge 2\). If \(\feature\) is also positive, then \(\hermRank=2\) \citep[Rem.~2.3]{baiSensitivityHermiteRank2019}.
    \end{itemize}
\end{remark}

Since the definition of slowly varying and therefore regularly varying functions varies slightly from author to author, we provide
the definition we use in this paper for clarity.

\begin{definition}[Slowly varying {\citep[Def.~2.1.1]{pipirasLongRangeDependenceSelfSimilarity2017}}]
    A measurable function \(\slowVar\colon (0, \infty) \to \real\) 
    is \emph{slowly varying} if it is positive on \([c, \infty)\) for some
    \(c>0\) and for all \(\lambda >0\), \(\frac{\slowVar(\lambda
    x)}{\slowVar(x)} \to 1\) as \(x \to \infty\).
\end{definition}

\begin{mdframed}[innertopmargin=0pt]
\begin{theorem}[Dobrushin-Major-Taqqu in Hölder topology]
    \label{thm: functional Dobrushin-Major-Taqqu}
    Assume the correlation function \(\rho\) is a regularly varying 
    function of index \(-\alpha\) with \(\alpha \in (0,\frac1{\hermRank})\), that is
    \[
        \tag{regularly-varying ``fat tail''}
        \rho(k) = k^{-\alpha} \slowVar(k)
        \qquad \forall k\in \nat 
    \]
    for a slowly varying function\footnotemark
    \(\slowVar\colon (0, \infty) \to \real\)
    and \(\feature \in L^p(\gamma)\) for some \(p > 2\). Then the rescaled interpolated random
    walk converges in distribution in the Hölder topology. That is
    for the variance \(\sigma^2 \coloneq c_\hermRank^2\hermRank!/(\hurst(2\hurst-1))\)
    \[
        \hat\interpF^n \coloneq\tfrac{n^{-\hurst}}{\slowVar(n)^{\frac{\hermRank}2}}\interpF^n \xrightarrow{d} \sigma Z^{\hermRank,\hurst}, \qquad \text{in the space } C^{\beta}([0,T]) \text{ for any } \beta \in (0,\hurst - \tfrac1p),
    \]
    where \(Z^{\hermRank,\hurst}\) is a ``Hermite process'' (Definition \ref{def: Hermite process}) with Hurst index
    \(\hurst = 1-\frac{\alpha \hermRank}{2}\in (\frac12, 1)\),
    and \(C^{\beta}([0,T])\) is the space of \(\beta\)-Hölder continuous functions on \([0,T]\).
\end{theorem}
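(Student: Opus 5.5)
The proof splits in the usual way into finite-dimensional convergence and tightness in \(C^{\beta}([0,T])\). For the finite-dimensional distributions I will invoke the classical Dobrushin--Major--Taqqu theorem \citep{taqquWeakConvergenceFractional1975,dobrushinNonCentralLimitTheorems1979}. It gives convergence of \(\frac{n^{-\hurst}}{\slowVar(n)^{\hermRank/2}}\sum_{i<\floor{nt}}\feature(X_i)\) to \(\sigma Z^{\hermRank,\hurst}_t\) jointly in finitely many \(t\). The interpolation correction is \((nt-\floor{nt})\feature(X_{\floor{nt}})\). It is bounded in \(L^2\) by \(\norm{\feature}_{L^2(\gamma)}\), so after multiplying by \(n^{-\hurst}\slowVar(n)^{-\hermRank/2}\to 0\) it vanishes in probability. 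Here I use \(\hurst>\frac12\) together with the fact that \(n^{-\epsilon}\slowVar(n)^{-\hermRank/2}\to0\) (Potter bounds). Slutsky then transfers the finite-dimensional convergence to \(\hat\interpF^n\).

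For tightness I use the Kolmogorov--Lamperti criterion. If for some \(r>2\) there is a uniform bound
\[
\Exp{|\hat\interpF^n_t-\hat\interpF^n_s|^r}\le C|t-s|^{r\hurst-\epsilon r}, \qquad s,t\in[0,T],
\]
then the family is tight in \(C^\beta\) for all \(\beta<\hurst-\epsilon-\frac1r\). The natural choice is \(r=p\), which is the largest moment available. It gives tightness for \(\beta<\hurst-\frac1p\), since \(\epsilon\) can be taken arbitrarily small.

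The bound splits into two regimes. \emph{Large increments}, \(|t-s|\ge 1/n\): write the increment as a block sum of about \(n|t-s|\) consecutive terms \(\feature(X_i)\) plus two boundary interpolation pieces. For the block sum \(S_m=\sum_{i<m}\feature(X_i)\), \(m\approx n|t-s|\), I need
\[
\Exp{|S_m|^p}^{1/p}\lesssim m^{\hurst}\slowVar(m)^{\hermRank/2}.
\]
The ratio \(\frac{m^\hurst\slowVar(m)^{\hermRank/2}}{n^\hurst\slowVar(n)^{\hermRank/2}}\) is at most \(C(m/n)^{\hurst-\epsilon}\) by Potter's bounds. The boundary pieces are single terms with \(p\)-th moment at most \(\norm{\feature}_{L^p}^p\), so they contribute \(n^{-\hurst p}\slowVar(n)^{-\hermRank p/2}\lesssim |t-s|^{(\hurst-\epsilon) p}\) in this regime. \emph{Small increments}, \(|t-s|<1/n\): the path is linear on each cell, so the increment equals \(n|t-s|\) times at most two values \(\feature(X_i)\). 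Its \(p\)-th moment is therefore \(\lesssim (n|t-s|)^p n^{-\hurst p}\slowVar(n)^{-\hermRank p/2}\), which is \(\le |t-s|^{(\hurst-\epsilon)p}\) because \(n|t-s|<1\) and \(\hurst<1\). This second regime is exactly where the interpolation is needed.

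The main obstacle is the \(L^p\) moment bound for \(S_m\) when \(\feature\) is only in \(L^p(\gamma)\) with \(p>2\), rather than a polynomial. Hypercontractivity works chaos by chaos, but the chaos decomposition does not converge in \(L^p\) with usable constants for general \(\feature\). My first attempt is to split \(\feature=\feature_{\le K}+\feature_{>K}\) into its Hermite expansion up to degree \(K\) and the remainder. The polynomial part is handled by hypercontractivity, which gives \(\norm{S_m^{(k)}}_{L^p}\le (p-1)^{k/2}\norm{S_m^{(k)}}_{L^2}\). Its \(L^2\) norms are \(\asymp m^{1-\alpha k/2}\slowVar^{k/2}\) for \(k\alpha<1\), and for larger \(k\) they are \(\lesssim m^{1/2}\) or \(m^{1-\alpha k/2}\) up to slowly varying factors; all of these are \(\lesssim m^{\hurst}\slowVar(m)^{\hermRank/2}\) up to \(m^\epsilon\). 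The tail \(\feature_{>K}\) I would instead control by interpolation (Riesz--Thorin/Hölder) between an \(L^2\) bound and the trivial bound \(\norm{S_m}_{L^{p'}}\le m\norm{\feature}_{L^{p'}}\) for some \(p'\in(p,\ldots)\). If \(L^{p'}\) is not available this loses \(p\) slightly, so I would apply the criterion with some \(r<p\) close to \(p\), still recovering every \(\beta<\hurst-\frac1p\). Alternatively, one could cite a known Rosenthal-type moment bound for functionals of long-range dependent Gaussian sequences, such as a Taqqu-type bound or the results in \citep{pipirasLongRangeDependenceSelfSimilarity2017}, if one applies. Getting the exact range \(\beta<\hurst-\frac1p\) relies on this step.
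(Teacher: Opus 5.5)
Your overall architecture (f.d.d.\ convergence from Dobrushin--Major--Taqqu plus Slutsky, the Kolmogorov--Lamperti criterion with $p$-th moments, and the split at $\abs{t-s}=\frac1n$ with the cell-wise linear bound for small increments) is the same as the paper's. There is a genuine gap, though, at exactly the step you flag: the $L^p$ bound for block sums of a non-polynomial $\feature\in L^p(\gamma)$. Your truncation/interpolation route cannot deliver it.

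Take $K$ with $(K+1)\alpha>1$. Then the tail $S^{(>K)}_m=\sum_{i<m}\feature_{>K}(X_i)$ satisfies $\norm{S^{(>K)}_m}_{L^2}\lesssim m^{1/2}$, which is the best $L^2$ growth available. The only $L^p$ information is the trivial bound $\norm{S^{(>K)}_m}_{L^p}\le m\norm{\feature_{>K}}_{L^p(\gamma)}$. Interpolating, by Hölder or Riesz--Thorin, with $\frac1r=\frac\theta2+\frac{1-\theta}p$ gives $\norm{S^{(>K)}_m}_{L^r}\lesssim m^{1-\theta/2}$. After normalizing by $n^{\hurst}\slowVar(n)^{\hermRank/2}$ with $m\approx nT$, this stays bounded in $n$ only if $1-\frac\theta2\le\hurst$, i.e.\ $\theta\ge 2(1-\hurst)$.

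So for $r$ close to $p$ ($\theta$ near $0$) the bound blows up rather than losing ``slightly''. The Hölder exponents reachable this way are at most $(2\hurst-1)(1-\frac1p)$, which falls short of $\hurst-\frac1p$ by $(1-\hurst)(1-\frac2p)>0$. For example, with $\hurst=\frac34$ and $p=10$ you get $0.45$ instead of $0.65$, losing even the Young regime $\beta>\frac12$. The defect is in the growth exponent of the trivial bound, not in a constant, so no choice of $K$ and no passage to $r<p$ recovers the claimed range.

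The missing ingredient is the Nourdin--Nualart moment inequality, which the paper adapts in its large-increment correlation bound and which needs no chaos truncation. Realize $X_i=W(\basis_i)$ for an isonormal process and write $\feature(X_i)=\delta^{\hermRank}\bigl(\shiftL{\feature}{\hermRank}(X_i)\basis_i^{\otimes\hermRank}\bigr)$, with $\shiftL{\feature}{\hermRank}$ the $\hermRank$-th shift. Apply the Meyer-type inequality $\norm{\delta^{\hermRank}(u)}_{L^p}\lesssim\sum_{k\le\hermRank}\norm{D^ku}_{L^p}$, then use $\norm{Y}_{L^p}=\norm{\scp{Y,Y}}_{L^{p/2}}^{1/2}$ together with Cauchy--Schwarz. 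This yields
\[
\norm[\Big]{\sum_i\gamma_i\feature(X_i)}_{L^p(\Omega)}\le C_{p,\feature}\Bigl(\sum_{i,j}\abs{\rho(i-j)}^{\hermRank}\Bigr)^{\frac12},\qquad \abs{\gamma_i}\le 1,
\]
with $C_{p,\feature}<\infty$ precisely because $\feature\in L^p(\gamma)$ with $p>2$. This is the $L^2$ growth rate, so no exponent is lost.

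What remains is the deterministic bound of the correlation sum by $C\,n^{2\hurst}\slowVar(n)^{\hermRank}(t-s)^{2\hurst-\epsilon\hermRank}$, uniformly for $\frac1n\le t-s\le T$. Here the textbook Potter bound, which needs both arguments beyond a threshold, does not cover the small lags appearing in the sum. That is why the paper proves a variant valid for all $y\ge a$. It is also why it is safer to work with $\sum\abs{\rho}^{\hermRank}$ than with $m^{\hurst}\slowVar(m)^{\hermRank/2}$, since $\slowVar$ need not be positive at small arguments. Your fallback of citing a Rosenthal-type bound ``if one applies'' is pointing at this estimate, but the proposal does not supply it.
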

\end{mdframed}
\footnotetext{
    While \(\rho\) is only defined as a sequence on the integers, such a
    sequence may be extended to a regularly-varying function on
    \([0,\infty)\), e.g.\ via \(\rho(x) \coloneq \rho(\floor{x})\)
    \citep[Thm.~1.9.5]{binghamRegularVariation1987}. Since the correlation function is bounded (\(\abs{\rho(k)}
    \le \rho(0) = 1\) by Cauchy-Schwarz) we assume the extension
\(\rho(x)\) is bounded by convention like Taqqu \citep[p.~289]{taqquWeakConvergenceFractional1975}.
}
\begin{remark}[Fractional Brownian motion]
    The rank \(\hermRank=1\) Hermite process is the fractional Brownian motion,
    \(Z^{1,\hurst}=\bm^\hurst\). The rank \(\hermRank=2\) Hermite process is the
    Rosenblatt process.
\end{remark}
\begin{remark}
    To be a useful result for Young-type integrals and differential equations,
    \(\hurst - \frac1p >\frac12\) or equivalently \(\feature \in L^p(\gamma)\) with \(p > \frac{2}{2\hurst - 1}\) is necessary. 
    For smaller \(p\) one could deduce a rough paths limit with the same arguments as in
    the proof of Theorem \ref{thm: functional Breuer-Major} below, but the limit of interpolated random
    walks would likely differ from the limit of the Euler-type random walks that are more common
    in discrete differential equations.
\end{remark}

Recall that the Hölder topology is induced by the norm
\(\norm{f}_{\beta} \coloneq \norm{f}_\infty + \holder{f}_\beta\)
with
\(
    \norm{f}_\infty \coloneq \sup_{t\in [0,T]} \abs{f(t)} 
\) and
\(
    \holder{f}_\beta \coloneq \sup_{s\neq t} \frac{\abs{f(t)-f(s)}}{\abs{t-s}^\beta}
\).
Since both the random walk and limit process start at zero one may
omit the sup-norm as it is dominated by the Hölder seminorm if the
functions are anchored at some point and \(T<\infty\).

\begin{definition}[Hermite process {\citep[Def.~4.2.1]{pipirasLongRangeDependenceSelfSimilarity2017}}]
    \label{def: Hermite process}
    The rank \(\hermRank\) Hermite process with Hurst index \(\hurst \in (\frac12, 1)\) is defined as
    \[
        Z^{\hermRank,\hurst}_t \coloneq A_{\hermRank, \hurst} \int_{\real^\hermRank}' \int_0^t \prod_{j=1}^\hermRank (s-x_j)_+^{-(\frac12 + \frac{1-\hurst}{\hermRank})} ds W(dx_1)\cdots W(dx_\hermRank),
    \]
    where \(W\) is the Wiener Gaussian white noise measure and \(\int'_{\real^\hurst}\) is the Itô integral that
    disregards the hyperplanes \(x_j = x_k\) for \(j \neq k\). The normalizing
    constant \(A_{\hermRank, \hurst}\) is selected such that
    \(\Exp{(Z^{\hermRank,\hurst}_1)^2} = 1\) and is known explicitly \citep[(4.2.7)]{pipirasLongRangeDependenceSelfSimilarity2017}.
    Equivalent representations are given in \citep[Cor.~4.2.11]{pipirasLongRangeDependenceSelfSimilarity2017}.
\end{definition}

The complement to the regularly-varying ``fat tail'' condition in Theorem
\ref{thm: functional Dobrushin-Major-Taqqu} is the summability condition
of the Breuer-Major Theorem that happens if the regularly varying
correlation function \(\rho\) has index \(\alpha > \frac1\hermRank\).
A functional Breuer-Major Theorem has first been proven in the Skorokhod topology
\citep{nourdinFunctionalBreuerMajor2020} and was subsequently strengthened to
convergence in a rough paths \(r\)-variation Skorokhod topology
\citep{altmanRoughFunctionalBreuerMajor2026}. Our proof of the
Dobrushin-Major-Taqqu Theorem in Hölder topology can be easily adapted to provide
a short proof of convergence of the interpolated random walk in the rough paths Hölder
topology. However, the limit of the interpolated random walk is different from
the limit of the step-function (Euler-type) random walk, which is much harder to analyze.
Specifically, for \(\Gamma\) defined in (1.11) of \citep{altmanRoughFunctionalBreuerMajor2026} the second level limit of the Euler-type random walk is given by
\[
    \mathbb{\bm}^{\mathrm{Eul}}_{0,t} = \mathbb{\bm}^{\text{Itô}}_{0,t} + t\Gamma.
\]
Most discretizations of differential equations correspond to this Euler-type random walk.

\begin{mdframed}[innertopmargin=0pt]
\begin{theorem}[Breuer-Major in rough Hölder topology]
    \label{thm: functional Breuer-Major}
    Assume the correlation \(\rho\) satisfies
    \[
        \tag{summable ``thin tail''}
        \sum_{k\in \Z} |\rho(k)|^{\hermRank} < \infty
    \]
    and \(\feature \in L^p(\gamma)\) for some \(p > 6\).
    Then the scaled \textbf{interpolated} random walk lifted into the space of Hölder
    rough paths converges in distribution against the enhanced Brownian motion, where the
    second level is given by the Stratonovich integral. Specifically define the scaled
    interpolated random walk
    \(\hat\interpF^n \coloneq\tfrac1{\sqrt{n}}\interpF^n\).
    Let \(\bm\) be the Brownian motion with Stratonovich integral \(\mathbb{\bm}^{\mathrm{Strat}}\). Then we have
    \[
        \mathbf{\hat\interpF}^n = (\hat\interpF^n, \tfrac12(\hat\interpF^n)^2)
        \xrightarrow{d} (\sigma\bm, \sigma^2\mathbb{\bm}^{\mathrm{Strat}})
        \qquad \text{in } \mathscr C^{\beta}([0,T]) \text{ for any } \beta \in (\tfrac13, \tfrac12 - \tfrac1p),
    \]
    with \(\mathscr C^{\beta}([0,T])\) the space of \(\beta\)-Hölder rough paths \citep[Def.~2.1]{frizCourseRoughPaths2020}
    and \(c_k\) the  Hermite coefficients of \(\feature\), cf.~\eqref{eq: Hermite expansion}.
    The variance \(\sigma^2 \coloneq \sum_{k \ge q} c_k^2 k! \sum_{m\in \integer} \rho(m)^k\)
    is well defined and finite.
\end{theorem}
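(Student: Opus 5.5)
The proof has two parts: convergence of finite-dimensional distributions, and tightness in a rough Hölder space. Since $\hat\interpF^n$ is one-dimensional and piecewise linear, its canonical lift has second level $\tfrac12(\hat\interpF^n_{s,t})^2$. Likewise $\mathbb{\bm}^{\mathrm{Strat}}_{s,t}=\tfrac12(\bm_t-\bm_s)^2$. The lift map $X\mapsto (X,\tfrac12 X^2)$, with increments $\mathbb X_{s,t}=\tfrac12 X_{s,t}^2$, is continuous from $C^\beta$ into $\mathscr C^\beta$, because $\abs{\tfrac12 X_{s,t}^2}\le \tfrac12\holder{X}_\beta^2\abs{t-s}^{2\beta}$ and the map is locally Lipschitz. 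It therefore suffices to prove $\hat\interpF^n\xrightarrow{d}\sigma\bm$ in $C^{\beta}([0,T])$ for every $\beta<\tfrac12-\tfrac1p$ and apply the continuous mapping theorem. The condition $p>6$ only serves to make the interval $(\tfrac13,\tfrac12-\tfrac1p)$ nonempty. Since the path is scalar, no genuine rough-path difficulty arises.

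For finite-dimensional distributions I use the classical Breuer--Major theorem: under $\sum\abs{\rho}^{\hermRank}<\infty$, the step process $n^{-1/2}\sum_{i<\floor{nt}}\feature(X_i)$ converges in f.d.d.\ to $\sigma\bm$, with $\sigma^2$ finite as stated. This is in \citep{nourdinFunctionalBreuerMajor2020}, which covers $\feature\in L^2(\gamma)$. The interpolation term is $n^{-1/2}(nt-\floor{nt})\feature(X_{\floor{nt}})$, whose $L^2$ norm is at most $n^{-1/2}\norm{\feature}_{L^2(\gamma)}\to0$. So the f.d.d.\ of $\hat\interpF^n$ and of the step process agree in the limit.

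Tightness in $C^{\beta}$ goes through Kolmogorov's criterion in its Hölder form: if $\Exp{\abs{\hat\interpF^n_t-\hat\interpF^n_s}^p}\le C\abs{t-s}^{p/2}$ uniformly in $n$, then the sequence is tight in $C^{\beta'}$ for all $\beta'<\tfrac12-\tfrac1p$. I split according to the size of $\abs{t-s}$.
\begin{itemize}
    \item For $s,t$ on the grid $\frac1n\integer$ with $t-s=m/n$, I need $\Exp{\abs{\sum_{i=k}^{k+m-1}\feature(X_i)}^p}\le C m^{p/2}$. This is a moment bound for partial sums under the summability assumption, and it is the main obstacle. First I would try hypercontractivity on each Wiener chaos together with the $L^2$ bound $\var\bigl(\sum_{i}H_k(X_i)\bigr)= k!\,m\sum_{\abs{j}<m}(1-\tfrac{\abs j}{m})\rho(j)^k\le k!\,m\sum_j\abs{\rho(j)}^{\hermRank}$. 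Hypercontractivity, however, brings factors $(p-1)^{k/2}$ that need not be summable for a general $\feature\in L^p$. So I would instead use a truncation: split $\feature=\feature_{\le N}+\feature_{>N}$ in chaos. For the finite-chaos part I get $L^p$ moments of order $m^{p/2}$ by hypercontractivity. For the tail I use the crude bound $\norm{\sum_i \feature_{>N}(X_i)}_{L^p}\le m\norm{\feature_{>N}}_{L^p}$, interpolated against the $L^2$ bound of order $\sqrt m\,\norm{\feature_{>N}}_{L^2}$. Alternatively I would invoke the known $L^p$ moment bound from \citep{nourdinFunctionalBreuerMajor2020} or \citep{altmanRoughFunctionalBreuerMajor2026}, which establish exactly such estimates with $\feature\in L^p$; in that case I would cite it.
    \item For $\abs{t-s}\le 1/n$ inside a single cell, linearity gives $\abs{\hat\interpF^n_t-\hat\interpF^n_s}=n^{-1/2}n\abs{t-s}\abs{\feature(X_i)}$. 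Hence the $p$-th moment is at most $\norm{\feature}_{L^p}^p\,(n\abs{t-s})^{p}n^{-p/2}\le C\abs{t-s}^{p/2}$, using $n\abs{t-s}\le1$.
    \item General $s<t$ I decompose into at most two partial cells and one grid block. I combine the pieces with the triangle inequality in $L^p$.
\end{itemize}
This yields tightness, hence $\hat\interpF^n\to\sigma\bm$ in $C^\beta$, and the lift gives the statement.
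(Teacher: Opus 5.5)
Relying on the citation, your argument is correct and follows the paper's outline. F.d.d.\ convergence comes from the classical Breuer--Major theorem, with the interpolation term negligible in $L^2$. Tightness comes from the Kolmogorov bound $\norm{\hat\interpF^n_t-\hat\interpF^n_s}_{L^p(\Omega)}\le C\abs{t-s}^{1/2}$: sub-cell increments are handled by linearity (as in Lemma~\ref{lem: small increment bound}), and longer ones by the Malliavin-calculus $L^p$ estimate of \citep{nourdinFunctionalBreuerMajor2020}. The paper re-derives that estimate in Lemma~\ref{lem: large increment correlation bound} from $\feature(X_i)=\delta^\hermRank(\shiftL{\feature}{\hermRank}(X_i)\basis_i^{\otimes\hermRank})$ and Meyer-type inequalities. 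You deviate from the paper in two harmless, arguably cleaner, ways.
\begin{itemize}
\item You get the second level from continuity of the scalar lift $X\mapsto(X,\tfrac12X_{s,t}^2)$ from $C^\beta$ to $\mathscr C^\beta$. The paper instead applies the rough Kolmogorov criterion, noting that $\bigl(\tfrac12(\hat\interpF^n_{s,t})^2\bigr)^{1/2}=\abs{\hat\interpF^n_{s,t}}/\sqrt2$ makes the second-level bound free.
\item You split a general increment into two partial cells plus a grid block. You therefore only need the unweighted bound $\norm{\sum_{i=k}^{k+m-1}\feature(X_i)}_{L^p}\le C\sqrt m$, whereas the paper carries boundary weights $\gamma_i\in[0,1]$ through the Malliavin estimate.
\end{itemize}

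Your first-choice route to that partial-sum bound does not work, so the citation, or the divergence argument behind it, is the substance of the proof. For fixed $N$, interpolating the crude tail bound of order $m$ against the $L^2$ bound of order $\sqrt m$ gives at best $\norm{\sum_i\feature_{>N}(X_i)}_{L^p}\lesssim m^{1-\theta/2}$ with $\theta<1$. Any exponent above $\tfrac12$ in $m$ leaves a factor $n^{\varepsilon}$ in $\norm{\hat\interpF^n_t-\hat\interpF^n_s}_{L^p}$, so tightness fails. Letting $N=N(m)\to\infty$ to shrink the tail brings back the head constant $\sum_{k\le N}\abs{c_k}\sqrt{k!}\,(p-1)^{k/2}$. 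That constant is exactly what diverges for general $\feature\in L^p(\gamma)$, even bounded $\feature$; moreover, partial chaos sums are not uniformly bounded on $L^p(\gamma)$ for $p\neq2$.

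The divergence representation avoids chaos-by-chaos summation altogether. The $\hermRank$ factors that produce $\abs{\rho(i-j)}^\hermRank$ sit in the deterministic kernels $\basis_i^{\otimes\hermRank}$. The random factors $D^k\shiftL{\feature}{\hermRank}(X_i)$, $k\le\hermRank$, are controlled in $L^p$ by one constant depending only on $\feature$ and $p$.
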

\end{mdframed}

The assumption \(p>6\) ensures \(\frac12 - \frac1p > \frac13\), which means that second level
rough paths are sufficient. We make this assumption for simplicity and the proof can be adapted to \(p\le 6\)
using higher level rough paths.
Recall, since the interpolated random walk is piecewise linear, its iterated integral
is given by
\[
    \int_s^t (\hat\interpF_r^n - \hat\interpF_s^n) d\hat\interpF_r^n
    = \int_s^t (\hat\interpF_r^n - \hat\interpF_s^n) \tfrac{d}{dr}\hat\interpF_r^n dr
    = \Bigl[\tfrac12 (\hat\interpF_r^n - \hat\interpF_s^n)^2\Bigr]_{r=s}^{r=t}
= \tfrac12(\hat\interpF^n_{s,t})^2
\]
using the usual rough paths notation \(\hat\interpF^n_{s,t} \coloneq \hat\interpF^n_t - \hat\interpF^n_s\).
Consequently, \((\hat\interpF^n, \tfrac12(\hat\interpF^n)^2)\) is the canonical rough path lift
\citep[cf.][Remark~2.2]{frizCourseRoughPaths2020}.

 \section{Proofs}

Recall that the typical proof strategy for functional limit theorems
is to first prove
convergence of the finite dimensional distributions, i.e.\ 
\((\hat\interpF^n_{t_1}, \dots, \hat\interpF^n_{t_k}) \xrightarrow{d} (\sigma
Z^{\hermRank,\hurst}_{t_1}, \dots, \sigma Z^{\hermRank,\hurst}_{t_k})\) for
all \(k\in \nat\) and \(t_1, \dots, t_k \in [0,T]\). Second, one
proves tightness of the sequence \((\hat\interpF^n)_{n\in \nat}\) in the functional space. 
Together, these imply convergence in function space \citep[e.g.][Thm.~21.38]{klenkeProbabilityTheoryComprehensive2014}.
As we want to prove a functional version of existing limit theorems we only
need to prove tightness in the Hölder topologies.

\subsection{Lemmas for both theorems}

To prove tightness we will use variants of Kolmogorov's criterion and therefore
bound \(\norm{\hat\interpF^n_t - \hat\interpF^n_s}_{L^p(\Omega)}\) in terms of \(\abs{t-s}\) for \(p\ge 1\).
It turns out that it easier to split this bound into two cases: \(\abs{t-s} \gtrless \frac1n\).

\begin{lemma}[Small increment bound]
    \label{lem: small increment bound}
    Let \(\interpF_t^n\) be the interpolated random walk defined in \eqref{eq:
    definition random walk interpolation} and
    assume \(\feature \in L^p(\gamma)\) for \(\gamma\) the standard Gaussian measure and \(p\ge 1\).
    If \(\abs{t-s} < \frac1n\) then
    \[
        \norm{\interpF^n_t - \interpF^n_s}_{L^p(\Omega)}
        \le 2n\abs{t-s}\norm{\feature(X_0)}_{L^p(\Omega)}.
    \]
\end{lemma}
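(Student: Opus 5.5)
Since \(\interpF^n\) is piecewise linear with nodes at the grid points \(k/n\), the idea is to write the increment \(\interpF^n_t - \interpF^n_s\) as a combination of at most two values \(\feature(X_k)\), with coefficients bounded by \(n\abs{t-s}\). Then Minkowski's inequality in \(L^p(\Omega)\) together with stationarity finishes the argument.

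Without loss of generality let \(s<t\) with \(t-s<\frac1n\). On each interval \([\frac kn,\frac{k+1}n]\) the path \(\interpF^n\) is affine with slope \(n\feature(X_k)\). This follows directly from \eqref{eq: definition random walk interpolation}: for \(r\in[\frac kn,\frac{k+1}n)\) we have \(\interpF^n_r=\sum_{i<k}\feature(X_i)+(nr-k)\feature(X_k)\), and \(\interpF^n\) is continuous at the grid points, since the interpolation term tends to \(\feature(X_k)\) as \(r\uparrow\frac{k+1}n\).

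Because \(t-s<\frac1n\), the interval \([s,t]\) contains at most one grid point. We split into two cases.

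\textbf{Case 1.} If \(s\) and \(t\) lie in the same cell \([\frac kn,\frac{k+1}n]\), then \(\interpF^n_t-\interpF^n_s = n(t-s)\feature(X_k)\). Hence
\[
\norm{\interpF^n_t-\interpF^n_s}_{L^p(\Omega)}= n(t-s)\norm{\feature(X_k)}_{L^p(\Omega)}.
\]

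\textbf{Case 2.} Otherwise there is a single grid point \(u=\frac{k}{n}\) with \(s<u<t\). Splitting at \(u\) gives
\[
\interpF^n_t-\interpF^n_s = n(u-s)\feature(X_{k-1}) + n(t-u)\feature(X_k).
\]
Minkowski's inequality then bounds the \(L^p\) norm by \(n(u-s)\norm{\feature(X_{k-1})}_{L^p(\Omega)}+n(t-u)\norm{\feature(X_k)}_{L^p(\Omega)}\).

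In both cases stationarity gives \(X_k\overset{d}{=}X_0\sim\gamma\) (recall \(\rho(0)=1\)), so \(\norm{\feature(X_k)}_{L^p(\Omega)}=\norm{\feature(X_0)}_{L^p(\Omega)}=\norm{\feature}_{L^p(\gamma)}<\infty\). Since \((u-s)+(t-u)=t-s\), both cases yield the bound \(n\abs{t-s}\norm{\feature(X_0)}_{L^p(\Omega)}\). This is even stronger than the claimed bound with the factor \(2\); the factor \(2\) would appear if one cruder bounded each of the two pieces in Case 2 by \(n\abs{t-s}\).

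There is no real obstacle here. The only care needed is the bookkeeping at the grid points: the continuity of \(\interpF^n\) at \(\frac kn\), and making sure the floor functions index the correct \(\feature(X_k)\) when \(nt\) is an integer.
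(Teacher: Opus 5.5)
Your proof is correct and follows the paper's argument: the same two cases (same cell versus straddling a single grid point \(k/n\)), the same two-term decomposition \((k-ns)\feature(X_{k-1})+(nt-k)\feature(X_k)\), and Minkowski plus stationarity. Your one refinement is the observation that the two coefficients sum to \(n(t-s)\). This improves the paper's constant \(2\) to \(1\), since the paper bounds each coefficient separately by \(n(t-s)\).
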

\begin{proof}
    Without loss of generality assume \(t>s\) and therefore
    \(\floor{nt} \ge \floor{ns}\). Due to \(\abs{t-s}< \frac1n\)
    we get \(0\le \floor{nt} - \floor{ns} \le n(t-s) + 1 < 2\).
    Since \(\floor{nt}\) and \(\floor{ns}\) are integers this implies
    \(\floor{n s} \le \floor{n t} \le \floor{n s}+1\)
    and we consider both cases:
\begin{casebycase}
    \item \textbf{\(\floor{n s} <  \floor{n t}\):} Let \(k\coloneq \floor{n t}\) be such that \(\floor{ns} = k-1\) since \(\abs{t-s} \le \frac1n\). Then by
    definition of the interpolation \eqref{eq: definition random walk interpolation} we have
    \begin{equation}
        \label{eq: different floor}
        \begin{aligned}
            \interpF^n_t - \interpF^n_s
            &= \feature(X_{k-1}) + (n t -k) \feature(X_k) - (n s - (k-1))\feature(X_{k-1})
            \\
            &= (k - n s)\feature(X_{k-1}) + (n t -k) \feature(X_k)
        \end{aligned}
    \end{equation}
    Due to \(n s \le k\le n t\) we have
    \(n t - k \le n (t-s)\) and
    \(k - n s\le n(t-s)\).
    Consequently we have by stationarity of \((X_k)_{k\in \nat_0}\) and the triangle
    inequality
    \[
        \norm{\interpF^n_t - \interpF^n_s}_{L^p(\Omega)}
        \le 2n(t-s)\norm{\feature(X_0)}_{L^p(\Omega)}.
    \]

    \item\textbf{\(\floor{n s} =  \floor{n t}\):}
Again let \(k=\floor{nt} = \floor{ns}\). Then by definition
    of the interpolation \eqref{eq: definition random walk interpolation} we have
    \begin{equation}
        \label{eq: same floor}
        \interpF^n_t - \interpF^n_s
        = n(t - s) \feature(X_k)
    \end{equation}
    Since \(1\le 2\) the claim follows by stationarity of \((X_k)_{k\in \nat_0}\).
    \qedhere
\end{casebycase}
\end{proof}

\begin{lemma}[Large increment correlation bound]
    \label{lem: large increment correlation bound}
    Let \(\interpF_t^n\) be the interpolated random walk defined in \eqref{eq:
    definition random walk interpolation} and assume \(\feature \in
    L^p(\gamma)\) for \(\gamma\) the standard Gaussian measure and \(p \ge 2\).
    Then for \(t-s \ge \frac1n\)
    \[
        \norm{\interpF^n_t - \interpF^n_s}_{L^p(\Omega)}
        \le C_{p, \feature}
        \Bigl(\sum_{i,j=0}^{\floor{nt}-\floor{ns}} \abs{\rho(i-j)}^{\hermRank}\Bigr)^{\frac12}
    \]
    for a constant \(C_{p, \feature} < \infty\) that depends on \(p\) and the feature function \(\feature\).
\end{lemma}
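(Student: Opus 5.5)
We will reduce the increment to a weighted partial sum of the sequence \(\feature(X_i)\) and then apply a moment bound for functionals of Gaussian sequences of Hermite rank \(\hermRank\).

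\textbf{Step 1: linear form.} Let \(a\coloneq\floor{ns}\) and \(b\coloneq\floor{nt}\). By \eqref{eq: definition random walk interpolation},
\[
    \interpF^n_t-\interpF^n_s=\sum_{i=a}^{b} w_i\,\feature(X_i),
\]
with weights \(w_a = 1-(ns-a)\), \(w_b = nt-b\) (if \(a<b\)), and \(w_i=1\) for \(a<i<b\). In every case \(w_i\in[0,1]\). By stationarity we may shift the indices to \(i=0,\dots,N\) with \(N\coloneq b-a\). Since \(t-s\ge\frac1n\), we have \(N\ge1\). The index range \(0,\dots,N\) matches the double sum in the claim.

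\textbf{Step 2: moment bound.} We aim to show, for \(p\ge2\), weights \(|w_i|\le1\), and \(\feature\) of Hermite rank \(\hermRank\) in \(L^p(\gamma)\),
\[
    \Bigl\|\sum_{i=0}^N w_i\feature(X_i)\Bigr\|_{L^p(\Omega)}^2\le C_{p,\feature}^2\sum_{i,j=0}^N|\rho(i-j)|^{\hermRank}.
\]
This is a standard hypercontractivity/Malliavin-type estimate, in the spirit of the moment bounds used for the functional Breuer--Major theorem (\citep{nourdinFunctionalBreuerMajor2020}, building on Taqqu's and Soulier's moment inequalities).

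The route we intend is as follows. Write \(\feature=\sum_{k\ge\hermRank}c_kH_k\) and \(X_i=W(e_i)\) with \(\scp{e_i,e_j}=\rho(i-j)\). We then want to use a representation \(\feature=\delta^{\hermRank}\) of a shifted function: there is \(\feature_\hermRank\) with \(\feature(X_i)=\delta^{\hermRank}(\feature_\hermRank(X_i)e_i^{\otimes\hermRank})\), where \(\feature_\hermRank\in L^p(\gamma)\) is controlled by \(\|\feature\|_{L^p(\gamma)}\). This is the Nourdin--Nualart--Peccati shift-operator lemma, valid precisely because the Hermite rank is \(\ge\hermRank\). Meyer's inequality then gives
\[
    \|\delta^{\hermRank}(u)\|_p\lesssim\sum_{k\le\hermRank}\|D^ku\|_{L^p(\Omega;\hilbert^{\otimes(\hermRank+k)})}.
\]
Expanding \(\bigl\|\sum_i w_iD^k(\feature_\hermRank(X_i))\,e_i^{\otimes\hermRank}\bigr\|^2\) produces the terms \(\scp{e_i,e_j}^{\hermRank}=\rho(i-j)^{\hermRank}\), multiplied by products of derivatives and by additional inner products of absolute value at most \(1\). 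We then apply Minkowski's inequality in \(L^{p/2}\) and Hölder's inequality together with stationarity.

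\textbf{Main obstacle.} The key difficulty is that \(D^k\feature_\hermRank\) need not be in \(L^p\) for a general \(L^p\) function \(\feature\). To get around this we will first prove the bound for polynomial (or smooth, bounded) \(\feature\), with a constant depending only on \(\|\feature\|_{L^p(\gamma)}\).

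If this dependence cannot be obtained directly, a fallback is the splitting argument. Write \(\feature=\sum_{k<K}c_kH_k+R_K\). For the finite part, hypercontractivity on each chaos gives \(\|\cdot\|_p\le (p-1)^{k/2}\|\cdot\|_2\). The \(L^2\) norm is then computed exactly as \(\sum_k c_k^2k!\sum_{i,j}w_iw_j\rho(i-j)^k\le C\sum_{i,j}|\rho(i-j)|^{\hermRank}\), using \(|\rho|\le1\). For the tail \(R_K\), with \(K\) large, we use \(L^{p'}\)-interpolation with \(2<p'<p\), or the Taqqu/Soulier inequality for functions of Gaussian vectors. In that inequality the required rank condition holds once \(\sum_j|\rho(i-j)|\) is small, which may require a separate argument.

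Finally, \(\feature\in L^p\) is approximated by polynomials in \(L^p(\gamma)\) that preserve the Hermite rank, and the bound passes to the limit by Fatou's lemma.
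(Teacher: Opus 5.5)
Your skeleton is the paper's argument: the weighted-sum form, the representation $\feature(X_i)=\delta^{\hermRank}(\feature_\hermRank(X_i)\basis_i^{\otimes\hermRank})$, Meyer's inequality, expanding the square to get $\rho(i-j)^\hermRank$, Minkowski in $L^{p/2}$, Cauchy--Schwarz and stationarity. But you never prove the one fact that makes $C_{p,\feature}$ finite, namely
\[
    \max_{k\le\hermRank}\norm{D^k(\feature_\hermRank(X_0))}_{L^p(\Omega;\hilbert^{\otimes k})}<\infty ,
\]
and you even suggest it may fail for general $\feature\in L^p(\gamma)$. It does not fail, and this is the missing idea. On Hermite polynomials $D(-L)^{-1}H_m(X_0)=H_{m-1}(X_0)\basis_0$, so $\feature_\hermRank(X_0)\basis_0^{\otimes\hermRank}=(D(-L)^{-1})^{\hermRank}\feature(X_0)$; the intermediate objects are centred because the Hermite rank is $\hermRank$. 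By Meyer's inequalities, $(-L)^{-1}$ maps centred elements of $\mathbb{D}^{k,p}$ into $\mathbb{D}^{k+2,p}$. Hence each application of $D(-L)^{-1}$ gains one Malliavin derivative in $L^p$, and $\norm{D^k(\feature_\hermRank(X_0))}_{L^p}\le c_{p,\hermRank}\norm{\feature}_{L^p(\gamma)}$ for all $k\le\hermRank$. This is precisely (3.4) together with Lemma~2.2 of \citep{nourdinFunctionalBreuerMajor2020}, which the paper invokes. With it, no approximation step is needed.

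Your workarounds do not replace this.

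\emph{Polynomial approximation.} Proving the bound for polynomial $\feature$ and passing to the limit needs a constant depending only on $\norm{\feature}_{L^p(\gamma)}$. That is exactly the estimate above, so this route is circular.

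\emph{Splitting fallback.} This breaks on the tail $R_K$. The Taqqu/Soulier moment inequality requires the off-diagonal row sums $\sup_i\sum_{j\ne i}\abs{\rho(i-j)}$ to be small. That is false in the long-memory setting of Theorem~\ref{thm: functional Dobrushin-Major-Taqqu}: over a block of length $N$ this sum grows like $N^{1-\alpha}$ up to slowly varying factors. It also cannot be arranged by choosing $K$, since it is a condition on $\rho$, not on the function.

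\emph{Interpolation.} Interpolating between $\norm{\sum_i w_iR_K(X_i)}_{2}^2\le\norm{R_K}_{L^2(\gamma)}^2\sum_{i,j}\abs{\rho(i-j)}^{K}$ and the trivial bound $(N+1)\norm{R_K}_{L^p(\gamma)}$ only controls an $L^{p'}$ norm with $p'<p$. Even then the growth in $N$ is wrong: for summable $\rho$ it gives $N^{1-\theta/2}$ with $\theta<1$, not $N^{1/2}$.

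So, as written, the $L^p$ bound of the lemma is not established.
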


\begin{proof}
    To simplify the linear interpolation terms in \(\interpF^n\) we define 
    \[
        \gamma_k = \begin{cases}
            n t - \floor{n t} & \text{if } k = \floor{n t} \\
            1-(n s - \floor{n s}) & \text{if } k = \floor{n s} \\
            1 & \text{else}.
        \end{cases}
    \]
    This is well defined since \(t-s \ge \frac1n\) implies \(\floor{n t} \neq \floor{n s}\).
    Then we get by definition of \(\interpF^n\) and \(\gamma_k\)
    \[
        \interpF_t^n - \interpF_s^n
        = \sum_{i=\floor{ns}}^{\floor{nt}-1}
        \feature(X_i) + (nt - \floor{nt})\feature(X_{\floor{nt}}) - (ns - \floor{ns})\feature(X_{\floor{ns}})
        = \sum_{i=\floor{ns}}^{\floor{nt}} \gamma_i \feature(X_i).
    \]
    \textbf{The case \(p=2\)} also conveys the intuition of the result. Indeed
    \begin{equation}
        \label{eq: variance bound}
        \norm{\interpF_t^n-\interpF_s^n}_{L^2(\Omega)}^2
        = 
        \var\Bigl[
            \sum_{i=\floor{n s}}^{\floor{n t}} \gamma_i \feature(X_i)
        \Bigr]
        = 
        \sum_{i, j = \floor{n s}}^{\floor{n t}} \gamma_i \gamma_j
        \cov(\feature(X_i), \feature(X_j)).
    \end{equation}
    Using \(\feature(x) = \sum_{m=\hermRank}^\infty c_m H_m(x)\) and
    \[
        \cov(H_m(X_i), H_k(X_j)) = \begin{cases}
            m! (\E[X_i X_j])^m & m=k\\
            0 & m\neq k
        \end{cases}
    \] 
    for jointly Gaussian \(X_i, X_j \sim \normal(0,1)\) \citep[e.g.][Prop.~2.2.1]{nourdinNormalApproximationsMalliavin2012}, we get
    with \(\E[X_i X_j] = \rho(i-j)\)
    \begin{align*}
        \abs[\big]{\cov(\feature(X_i), \feature(X_j))}
        &= \abs[\Big]{\sum_{m=\hermRank}^\infty c_m^2 \cov(H_m(X_i), H_m(X_j))}
        \\
        &\le \sum_{m=\hermRank}^\infty c_m^2 m! \abs{\rho(i-j)}^m
        \le 
        \underbrace{\Bigl(\sum_{m=\hermRank}^\infty c_m^2 m!\Bigr)}_{= \var[\feature(X_0)]} \abs{\rho(i-j)}^\hermRank,
    \end{align*}
    where we used \(\abs{\rho(i-j)} \le \rho(0) = 1\) by Cauchy-Schwarz for the last inequality.
    Inserted into \eqref{eq: variance bound} with \(\abs{\gamma_i} < 1\) this yields 
    \[
        \norm{\interpF_t^n-\interpF_s^n}_{L^2(\Omega)}^2
        \le \var[\feature(X_0)] \sum_{i, j = \floor{n s}}^{\floor{n t}} \abs{\rho(i-j)}^\hermRank.
    \]
    \textbf{The case \(p>2\)} follows \citep{nourdinFunctionalBreuerMajor2020} and uses
    Malliavin calculus
    \citep[e.g.][]{nualartMalliavinCalculusRelated2006,nourdinNormalApproximationsMalliavin2012}.
Since we are only interested in the distribution, we will assume without
    loss of generality that \(X_k = W(\basis_k)\) for some
    isonormal Gaussian process \(W\) over the Hilbert space \(\hilbert=L^2(\real_+)\) with
    vectors \(\basis_k\) such that
    \[
        \cov(W(\basis_k), W(\basis_l)) = \scp{\basis_k, \basis_l}_\hilbert = \rho(k-l) = \cov(X_k, X_l).
    \]
    In particular \(\norm{\basis_k}_\hilbert^2 = \rho(0) = 1\) and the \(\basis_k\) are unit vectors in \(\hilbert\).
    With this reformulation we are in the Malliavin calculus setting and
    the application of \citep[Lemma~2.1 (2.12)]{nourdinFunctionalBreuerMajor2020}
    results in
    \[
        \sum_{i=\floor{ns}}^{\floor{nt}} \gamma_i \feature(X_i)
        \overset{\text{\citep{nourdinFunctionalBreuerMajor2020}}}= \sum_{i=\floor{ns}}^{\floor{nt}} \delta^\hermRank(\gamma_i\shiftL{\feature}{\hermRank}(X_i)\basis_i^{\otimes \hermRank})
        \overset{\text{lin.}}=
        \delta^\hermRank\Bigl(\sum_{i=\floor{ns}}^{\floor{nt}} \gamma_i\shiftL{\feature}{\hermRank}(X_i)\basis_i^{\otimes \hermRank}\Bigr),
    \]
    where \(\delta\) is the divergence operator that is the adjoint of the
    Malliavin derivative \(D\) and \(\shiftL{\feature}{\hermRank}\) is the
    \(\hermRank\)-th shift of \(\feature\):
    \[
        \shiftL{\feature}{\hermRank}(x) = \sum_{k=\hermRank}^\infty c_k H_{k-\hermRank}(x)
        \qquad\text{for}\qquad \feature(x) = \sum_{k=0}^\infty c_k H_k(x).
    \]
    With the application of Equation (2.8) from \citep{nourdinFunctionalBreuerMajor2020}
    or \citep[Corollary 2.15]{altmanRoughFunctionalBreuerMajor2026} this implies
    \begin{align*}
        \norm{\interpF_t^n - \interpF_s^n}_{L^p(\Omega)}
        &= \norm[\Big]{
            \delta^\hermRank\Bigl(\sum_{i=\floor{ns}}^{\floor{nt}} \gamma_i\shiftL{\feature}{\hermRank}(X_i)\basis_i^{\otimes \hermRank}\Bigr)
        }_{L^p(\Omega)}
        \\
        \overset{\text{\citep{nourdinFunctionalBreuerMajor2020}}}&\le
\kappa_{p,\hermRank} \sum_{k=0}^\hermRank \norm[\Big]{
            \sum_{i=\floor{ns}}^{\floor{nt}}
            \gamma_i D^k(\shiftL{\feature}{\hermRank}(X_i))\basis_i^{\otimes \hermRank}
        }_{L^p(\Omega; \hilbert^{\otimes (k+\hermRank)})}.
    \end{align*}
for a constant \(\kappa_{p,\hermRank} < \infty\).
    Observe that for any \(Y\in L^p(\Omega; \hilbert^{\otimes r})\) we have
    \begin{equation}
        \label{eq: half p trick}    
        \norm{Y}_{L^p(\Omega; \hilbert^{\otimes r})}
        = \Bigl( \int \norm{Y}_{\hilbert^{\otimes r}}^p d\Pr\Bigr)^\frac1p
        = \Bigl( \int \scp{Y,Y}^{\frac{p}2}_{\hilbert^{\otimes r}} d\Pr\Bigr)^{\frac2p\frac12}
        = \norm{\scp{Y,Y}_{\hilbert^{\otimes r}}}_{L^{\frac{p}2}(\Omega)}^{\frac12}.
    \end{equation}
    This implies
    \begin{align*}
        &\norm{\interpF_t^n - \interpF_s^n}_{L^p(\Omega)}
        \\
        &\le \kappa_{p,\hermRank} \sum_{k=0}^\hermRank \norm[\Big]{
            \sum_{i=\floor{ns}}^{\floor{nt}} \gamma_i D^k\bigl(\shiftL{\feature}{\hermRank}(X_i)\bigr)\basis_i^{\otimes \hermRank}
        }_{L^p(\Omega; \hilbert^{\otimes (k+\hermRank)})}
        \\
        \overset{\eqref{eq: half p trick}}&= \kappa_{p,\hermRank} \sum_{k=0}^\hermRank \norm[\Big]{
            \sum_{i,j=\floor{ns}}^{\floor{nt}} \gamma_i \gamma_j
            \scp{D^k\bigl(\shiftL{\feature}{\hermRank}(X_i)\bigr), D^k\bigl(\shiftL{\feature}{\hermRank}(X_j)\bigr)}_{\hilbert^{\otimes k}}
            \scp{\basis_i, \basis_j}^{\hermRank}_{\hilbert}
        }_{L^{\frac{p}2}(\Omega)}^{\frac12}
        \\
        &\le
        \kappa_{p,\hermRank} \sum_{k=0}^\hermRank
        \Bigl(\sum_{i,j=\floor{ns}}^{\floor{nt}} \underbrace{\gamma_i \gamma_j}_{\le 1} 
        \underbrace{
            \norm[\big]{\scp{D^k\bigl(\shiftL{\feature}{\hermRank}(X_i)\bigr), D^k\bigl(\shiftL{\feature}{\hermRank}(X_j)\bigr)}_{\hilbert^{\otimes k}}}_{L^{\frac{p}2}(\Omega)}
}_{
            \overset{\mathrm{C.S.}}\le \norm{D^k(\shiftL{\feature}{\hermRank}(X_i))}_{L^p(\Omega;\hilbert^{\otimes k})}
            \norm{D^k(\shiftL{\feature}{\hermRank}(X_j))}_{L^p(\Omega;\hilbert^{\otimes k})}
        }
        \underbrace{\abs{\scp{\basis_i, \basis_j}}^{\hermRank}_\hilbert}_{=\abs{\rho(i-j)}^{\hermRank}}
        \Bigr)^{\frac12}
\\[-2ex]
        \overset{\substack{X_i \text{ stationary}\\l=i-\floor{ns}\\l'=j-\floor{ns}}}&\le
        \kappa_{p,\hermRank}(1+\hermRank)
        \max_{k=0,\dots, \hermRank}\norm{D^k(\shiftL{\feature}{\hermRank}(X_0))}_{L^p(\Omega;\hilbert^{\otimes k})}
        \Bigl(\sum_{l,l'=0}^{\floor{nt}-\floor{ns}} \abs{\rho(l-l')}^{\hermRank}\Bigr)^{\frac12}.
    \end{align*}
    And by Equation (3.4) and Lemma 2.2 in \citep{nourdinFunctionalBreuerMajor2020}, using \(p >2\), we have for every \(k\in \set{0,\dots, \hermRank}\)
    and \(L\) the Ornstein-Uhlenbeck generator
    \[
        \norm{D^k(\shiftL{\feature}{\hermRank}(X_0))}_{L^p(\Omega; \hilbert^{\otimes k})}
        \overset{\text{\citep[(3.4)]{nourdinFunctionalBreuerMajor2020}}}\le
        \E\bigl[\norm{D^k(D(-L)^{-1})^\hermRank(\feature(X_0))}_{L^2(\hilbert^{\otimes (k+\hermRank)})}^p\bigr]^{\frac1p}
        \overset{\text{\citep[Lem.~2.2]{nourdinFunctionalBreuerMajor2020}}}< \infty.
    \]
    We thus have the claim with \(C_{p,\feature}\coloneq \kappa_{p,\hermRank} (1+\hermRank) \max_{k=0, \dots, \hermRank} \norm{D^k(\shiftL{\feature}{\hermRank}(X_0))}_{L^p(\Omega)}< \infty\).
\end{proof}

\subsection{Proof of Theorem \ref{thm: functional Dobrushin-Major-Taqqu}}

The convergence of the finite dimensional distributions
\(\hat\interpF^n \overset{\fdd}\to \sigma Z^{\hermRank,\hurst}\)
is the well-known Dobrushin-Major-Taqqu theorem.
Taqqu \citep{taqquWeakConvergenceFractional1975} proved the case \(\hermRank \in
\set{1,2}\) and Dobrushin and Major \citep[Theorem 2]{dobrushinNonCentralLimitTheorems1979} proved the case
\(\hermRank > 2\). For a modern treatment using the terminology of
Hermite processes see \citep[Thm.~5.3.1]{pipirasLongRangeDependenceSelfSimilarity2017}.
The fact that we consider the interpolated random walk
is of no consequence, since the difference
\[
    \Delta_t \coloneq \tfrac{n^{-\hurst}}{\slowVar(n)^{\frac{\hermRank}2}}(\interpF^n_t - \interpF^n_{\frac{\floor{nt}}{n}})
    = \tfrac{n^{-\hurst}}{\slowVar(n)^{\frac{\hermRank}2}} (nt - \floor{nt}) \feature(X_{\floor{nt}})
\]
converges to zero in \(L^2(\Omega)\) thanks to \(\E[\feature(X_0)^2] < \infty\) and \(n^{-\hurst}\slowVar(n)^{-\frac{\hermRank}2}\to 0\). By Slutzky's theorem
\citep[e.g.][Thm.~13.18]{klenkeProbabilityTheoryComprehensive2014} we thus have 
\(\hat\interpF^n_t \overset{\fdd}\to \sigma
Z^{\hermRank,\hurst}_t\) if and only if \(\hat\interpF^n_{\frac{\floor{nt}}{n}}
\overset{\fdd}\to \sigma Z^{\hermRank,\hurst}_t\).

Our contribution is to show tightness of the sequence \(\interpF^n\) in
\(C^\beta([0,T])\) for any \(\beta < \hurst - \frac1p\), which then implies functional
convergence.
Since \(C^\beta([0,T]) \subset C^{\beta'}([0,T])\) for \(\beta' < \beta\),
we can assume without loss of generality \(\beta \in (\frac12 - \frac1p,\hurst - \frac1p)\)
as this set is non-empty.

\begin{lemma}[Tightness criterion check for regularly-varying covariance]
    \label{lem: tightness criterion check for regularly varying covariance}
    If \(\abs{\rho(k)} =  \abs{k}^{-\alpha}\slowVar(k)\) for a slowly
    varying function \(\slowVar\), then for all \(p\in [2, \infty)\) and all
    \(\beta \in (\frac12 - \frac1p, \hurst - \frac1p)\) with \(\hurst = 1-\frac{\alpha \hermRank}{2}\)
    there exists a constant \(C_{\mathrm{tight}}\) and \(n_0 \in \nat\)
    such that 
    \[
        \norm{\hat\interpF_t^n-\hat\interpF_s^n}_{L^p(\Omega)}
        \le C_{\mathrm{tight}} \abs{t-s}^{\frac1p + \beta}
        \qquad \text{for all  \(n\ge n_0\)  and \(t,s \in [0,T]\)}.
    \]
\end{lemma}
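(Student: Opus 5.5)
We split according to whether \(\abs{t-s}<\frac1n\) or \(\abs{t-s}\ge\frac1n\), as suggested by the two preceding lemmas. Set \(a_n \coloneq n^{-\hurst}\slowVar(n)^{-\hermRank/2}\), so that \(\hat\interpF^n = a_n \interpF^n\), and assume \(t>s\) without loss of generality. The target exponent is \(\frac1p+\beta\). By the choice of \(\beta\) we have
\[
\tfrac12 < \tfrac1p+\beta < \hurst <1 .
\]

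\textbf{Small increments.} Suppose \(t-s<\frac1n\). Lemma \ref{lem: small increment bound} gives
\[
\norm{\hat\interpF^n_t-\hat\interpF^n_s}_{L^p} \le 2 a_n n (t-s)\norm{\feature(X_0)}_{L^p}.
\]
Write \(n(t-s) = (n(t-s))^{\frac1p+\beta}\,(n(t-s))^{1-\frac1p-\beta}\). The second factor is at most \(1\), since \(n(t-s)<1\) and \(1-\frac1p-\beta>0\). Hence the right-hand side is bounded by
\[
2\norm{\feature(X_0)}_{L^p}\, a_n n^{\frac1p+\beta}\,(t-s)^{\frac1p+\beta}.
\]
It remains to check that \(a_n n^{\frac1p+\beta} = n^{\frac1p+\beta-\hurst}\slowVar(n)^{-\hermRank/2}\) stays bounded. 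This holds because \(\frac1p+\beta-\hurst<0\), and by Potter's bounds \(\slowVar(n)^{-\hermRank/2}\le C n^{\varepsilon}\) for any \(\varepsilon>0\) and all large \(n\). Choosing \(\varepsilon<\hurst-\frac1p-\beta\) makes the product tend to \(0\), so it is bounded for \(n\ge n_0\).

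\textbf{Large increments.} Suppose \(t-s\ge\frac1n\) and let \(N\coloneq\floor{nt}-\floor{ns}\). Then \(1\le N\le n(t-s)+1\le 2n(t-s)\). Lemma \ref{lem: large increment correlation bound} bounds the increment by \(C_{p,\feature}\, a_n\, S_N^{1/2}\), where
\[
S_N \coloneq \sum_{i,j=0}^{N}\abs{\rho(i-j)}^{\hermRank} \le 2(N+1)\sum_{k=0}^{N}\abs{\rho(k)}^{\hermRank}.
\]
The summand satisfies \(\abs{\rho(k)}^{\hermRank}=k^{-\alpha\hermRank}\slowVar(k)^{\hermRank}\), and \(\alpha\hermRank<1\). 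Karamata's theorem therefore gives \(\sum_{k\le N}\abs{\rho(k)}^\hermRank\sim \frac{N^{1-\alpha\hermRank}\slowVar(N)^\hermRank}{1-\alpha\hermRank}\). Since \(2-\alpha\hermRank=2\hurst\), this yields
\[
S_N\le C\, N^{2\hurst}\slowVar(N)^{\hermRank}
\]
for all \(N\ge1\). Small \(N\) are absorbed into the constant, using that \(\rho\) is bounded and \(\slowVar\) is positive and locally bounded away from \(0\) and \(\infty\) on the relevant range; if needed we can replace \(\slowVar\) by an asymptotically equivalent normalized version. Consequently
\[
\norm{\hat\interpF^n_t-\hat\interpF^n_s}_{L^p}\le C \Bigl(\tfrac{N}{n}\Bigr)^{\hurst}\Bigl(\tfrac{\slowVar(N)}{\slowVar(n)}\Bigr)^{\hermRank/2}.
\]

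\textbf{Main obstacle.} The hard step is controlling the slowly varying ratio uniformly in \(1\le N\le 2nT\). Set \(x\coloneq N/n\in[\frac1n,2T]\). Potter's bounds give, for any \(\varepsilon>0\), constants \(A\) and \(M\) such that
\[
\frac{\slowVar(N)}{\slowVar(n)}\le A\max(x^{\varepsilon},x^{-\varepsilon})
\quad\text{whenever } N,n\ge M.
\]
For \(N<M\), we use that \(\slowVar(N)\) is bounded while \(\slowVar(n)^{-1}\le C n^{\varepsilon}\le C' x^{-\varepsilon}\), because \(x\le M/n\) in this range. In every case
\[
\Bigl(\tfrac{\slowVar(N)}{\slowVar(n)}\Bigr)^{\hermRank/2}\le C x^{-\varepsilon\hermRank/2}
\quad\text{for } x\le 2T .
\]
Choose \(\varepsilon\) with \(\hurst-\varepsilon\hermRank/2\ge\frac1p+\beta\), which is possible since \(\frac1p+\beta<\hurst\). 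Because \(x\le 2T\) is bounded and \(x\le 2(t-s)\), we obtain
\[
x^{\hurst-\varepsilon\hermRank/2}\le C_T\, x^{\frac1p+\beta}\le C_T' (t-s)^{\frac1p+\beta}.
\]
This completes the large increment case, with a constant depending on \(T\), and combining the two cases proves the claim for all \(n\ge n_0\).
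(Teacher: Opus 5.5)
Your argument is correct. The small-increment case is the paper's argument: the paper picks \(\epsilon\) so that \(\slowVar(n)^{-\hermRank/2}\le C n^{\hurst-\frac1p-\beta}\) exactly, and you take it slightly smaller.

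The large-increment case takes a genuinely different route. The paper applies a Potter bound to each term, \(\abs{\rho(l)}\le A\max\{T^{2\epsilon},1\}\,n^{\epsilon}\slowVar(n)\,l^{-(\alpha+\epsilon)}\) for all \(1\le l\le n(t-s)\). This is why it needs its extended Potter lemma valid down to \(l=1\). It then sums the pure powers explicitly. That summation needs \((\alpha+\epsilon)\hermRank<1\), i.e.\ \(\epsilon<\frac1\hermRank-\alpha\), and this is exactly where the restriction \(\beta>\frac12-\frac1p\) is used.

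You instead apply Karamata's theorem to \(\sum_{k\le N}\abs{\rho(k)}^\hermRank\). The slowly varying factor then passes through the summation intact as \(\slowVar(N)^\hermRank\). Only the single ratio \(\slowVar(N)/\slowVar(n)\) needs a Potter bound, and there is no summability constraint on \(\epsilon\). In particular, you never use \(\beta>\frac12-\frac1p\).

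The cost is a heavier black box and non-explicit constants. You need the discrete Karamata theorem, justified via the step extension \(\rho(\lfloor x\rfloor)\) and \(\abs{\rho}\le 1\). The paper's route is elementary and gives explicit constants.

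One remark: the hedge you add for small \(N\) is necessary, not optional. \(\slowVar\) need only be positive eventually. For example, if \(\rho(1)=0\) then \(\slowVar(1)=0\) while \(S_1=2\), so \(S_N\le CN^{2\hurst}\slowVar(N)^\hermRank\) fails at \(N=1\).

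One fix is to replace \(\slowVar\) by a function that agrees with it for large arguments and is positive everywhere. A more direct fix is to bound \(S_N\le M^2\) for \(N<M\). Then use \(a_n\le C n^{-(\hurst-\epsilon\hermRank/2)}\le C x^{\hurst-\epsilon\hermRank/2}\), which holds because \(x=N/n\ge\frac1n\).
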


Lemma \ref{lem: tightness criterion check for regularly varying covariance}
and convergence of finite dimensional distributions then
imply convergence in \(C^{\beta'}([0,T])\) for all \(\beta'< \beta\)
\citep{lampertiConvergenceStochasticProcesses1962}. And since
\(\beta< \hurst - \frac1p\) was arbitrary this implies Theorem~\ref{thm: functional Dobrushin-Major-Taqqu}.

\begin{proof}[Proof of Lemma \ref{lem: tightness criterion check for regularly varying covariance}]
\begin{casebycase}
    \item\label{it: |t-s|<1/n} \textbf{\(\abs{t-s} < \frac1{n}\):} Without loss of generality \(t> s\).
    By Lemma \ref{lem: small increment bound} we have
    \begin{equation}
        \label{eq: bound in the small (t-s) case}
        \norm{\hat\interpF_t^n - \hat \interpF_s^n}_{L^p(\Omega)}
        = \norm[\Big]{\frac{n^{-\hurst}}{\slowVar(n)^{\frac{\hermRank}2}}(\interpF_t^n-\interpF_s^n)}_{L^p(\Omega)}
        \overset{\text{Lem.~\ref{lem: small increment bound}}}\le 2\norm{\feature(X_0)}_{L^p(\Omega)}\frac{n^{1-\hurst}}{\slowVar(n)^{\frac\hermRank2}} \abs{t-s}.
    \end{equation}
    We therefore only need to bound the scaling factor \(\frac{n^{1-\hurst}}{\slowVar(n)^{\frac\hermRank2}}\) in terms of \(\abs{t-s}\).
    For this purpose we need to control the slowly varying function \(\slowVar\), which
    we will do using the Potter bound for \(a=1\) in Lemma~\ref{lem: potter bound}.
    Specifically for the constants \(x_0\) and \(A\) from Lemma \ref{lem: potter
    bound}, we choose \(n_0 =\ceil{x_0}\). Using the assumption
    \(\beta\in(\frac12 - \frac1p,
    \hurst - \frac1p)\) and \(\hurst = 1- \frac{\alpha \hermRank}2\) we define
    \begin{equation}
        \label{eq: epsilon selection}
        \epsilon \coloneq \tfrac{2(\hurst - \frac1p- \beta)}{\hermRank} \in (0, \tfrac1\hermRank - \alpha).
    \end{equation}
    Then by Lemma \ref{lem: potter bound} \(\abs{\frac{\slowVar(n_0)}{\slowVar(n)}} \le A
    (\frac{n}{n_0})^\epsilon\) for \(n\ge n_0\) and thereby 
    \begin{equation}
        \label{eq: slow var bound 1}    
        \slowVar(n)^{-\frac\hermRank2} \le A^{\frac\hermRank2} \abs{\slowVar(n_0)n_0^\epsilon}^{-\frac{\hermRank}2}
        n^{\epsilon\frac{\hermRank}2}
        = \bigl(\tfrac{A}{\slowVar(n_0)n_0^\epsilon}\bigr)^{\frac\hermRank2}n^{\hurst- \frac1p - \beta}.
    \end{equation}
    This implies
    \begin{equation}
        \label{eq: bound normalization * (t-s)}
        \frac{n^{1-\hurst}}{\slowVar(n)^{\frac\hermRank2}}
        \overset{\eqref{eq: slow var bound 1}}\le
        \bigl(\tfrac{A}{\slowVar(n_0)n_0^\epsilon}\bigr)^{\frac\hermRank2}
        n^{1-\frac1p -\beta}
        \le
        \bigl(\tfrac{A}{\slowVar(n_0)n_0^\epsilon}\bigr)^{\frac\hermRank2}
        (t-s)^{\frac1p + \beta - 1}
    \end{equation}
    using \(\beta + \frac1p < \hurst < 1\) and \(n<\frac1{t-s}\) (Case \ref{it: |t-s|<1/n}) in the second inequality.
    The equations \eqref{eq: bound in the small (t-s) case} and \eqref{eq: bound normalization * (t-s)}
    put together yield
    \[
        \norm{\hat\interpF_t^n - \hat \interpF_s^n}\le C_{\text{tight}}^{(1)} \abs{t-s}^{\frac1p+\beta}
    \]
    with the constant
    \(C_{\text{tight}}^{(1)} \coloneq 2\norm{\feature(X_0)}_{L^p(\Omega)}A^{\frac{\hermRank}2}(\slowVar(n_0)n_0^\epsilon)^{-\frac\hermRank2}\).

    \item\label{it: |t-s|>=1/n} \textbf{\(\abs{t-s} \ge \frac1{n}\):}
    Assume \(t\ge s\) without loss of generality and
    recall from \eqref{eq: epsilon selection} that the choice \(\epsilon = \frac{2(\hurst -\frac1p - \beta)}\hermRank\) satisfies
    \(\epsilon \in (0, \frac1\hermRank -\alpha)\).
    We may therefore apply Lemma \ref{lem: large increment correlation bound}
    and \ref{lem: bound on the correlation sum}
    to obtain
    \begin{align*}
        \norm{\hat\interpF_t^n - \hat\interpF_s^n}_{L^p(\Omega)}
        = \norm[\Big]{\frac{n^{-\hurst}}{\slowVar(n)^{\frac\hermRank2}}\bigl(\interpF_t^n - \interpF_s^n\bigr)}_{L^p(\Omega)}
        \overset{\text{Lem.~\ref{lem: large increment correlation bound}}}&\le
        C_{p, \feature}
        \Bigl(\frac{n^{-2\hurst}}{\slowVar(n)^{\hermRank}}\sum_{i,j=0}^{\floor{nt}-\floor{ns}} \abs{\rho(i-j)}^{\hermRank}\Bigr)^{\frac12}
        \\
        \overset{\text{Lem.~\ref{lem: bound on the correlation sum}}}&\le
        C_{p,\feature} \sqrt{C} (t-s)^{\hurst - \frac{\epsilon\hermRank}2} 
        \\
        &= C_{\text{tight}}^{(2)} (t-s)^{\frac1p + \beta},
    \end{align*}
    with \(C_{\text{tight}}^{(2)} \coloneq C_{p, \feature} \sqrt{C}\),
    where \(C_{p, \feature}\) is the constant from Lemma \ref{lem: large increment
    correlation bound} and \(C\) from Lemma \ref{lem: bound on the
    correlation sum}.
\end{casebycase}
    Taking the maximum of the constants \(C_{\text{tight}}^{(1)}\) and \(C_{\text{tight}}^{(2)}\) from the two cases (Case~\ref{it: |t-s|<1/n} and \ref{it: |t-s|>=1/n}) yields the claim.
\end{proof}

\begin{lemma}[Correlation bound in the large increment case]
    \label{lem: bound on the correlation sum}
    Assume that \(\rho\) satisfies the regular variation assumption
    \(\abs{\rho(k)} = \abs{k}^{-\alpha}\slowVar(k)\) for a slowly varying
    function. Then for all
    \(\epsilon \in (0, \frac1{\hermRank} - \alpha)\) there exists \(n_0 =
    n_0(\slowVar, \epsilon) \in \nat\) and \(A=A(n_0, \slowVar)>0\) such that for all
    \(n\ge n_0\) and \(\frac1n \le t-s\)
    \[
        \frac{n^{-2\hurst}}{\slowVar(n)^\hermRank} \sum_{i, j = 0}^{\floor{n t}-\floor{n s}} \abs{\rho(i-j)}^\hermRank
        \le C (t-s)^{2\hurst - \epsilon \hermRank}.
   \]
    with \(C=\bigl(
            \tfrac{A^\hermRank \max\set{T^{2\epsilon\hermRank},1}}{(2\hurst-\epsilon \hermRank)(2\hurst - 1-\epsilon\hermRank)}
            + 15\bigl(\tfrac{A}{\slowVar(n_0)n_0^\epsilon}\bigr)^\hermRank
        \bigr) > 0
    \).
\end{lemma}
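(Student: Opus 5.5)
Set \(N \coloneq \floor{nt}-\floor{ns}\). Since \(t-s\ge \frac1n\), we have \(N \le n(t-s)+1 \le 2n(t-s)\), and also \(N\ge 1\). I would first rewrite the double sum by grouping pairs according to the lag \(k=i-j\):
\[
\sum_{i,j=0}^{N}\abs{\rho(i-j)}^\hermRank \le (N+1)\Bigl(1 + 2\sum_{k=1}^{N}\abs{\rho(k)}^\hermRank\Bigr) = (N+1)\Bigl(1+2\sum_{k=1}^N k^{-\alpha\hermRank}\slowVar(k)^\hermRank\Bigr).
\]
The idea is to compare \(\slowVar(k)\) with \(\slowVar(n)\) through the Potter bound (Lemma~\ref{lem: potter bound}), which is already used in the proof of Lemma~\ref{lem: tightness criterion check for regularly varying covariance}. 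Take \(x_0, A\) from Lemma~\ref{lem: potter bound} with tolerance \(\epsilon\), and set \(n_0=\ceil{x_0}\). The sum then splits into two ranges.

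\emph{Range \(k\ge n_0\).} Here Potter gives \(\slowVar(k)/\slowVar(n)\le A\max\{(k/n)^\epsilon,(k/n)^{-\epsilon}\}\), and since \(k\le N\le 2nT\) this becomes \(\le A\max\{(2T)^\epsilon,1\}(k/n)^{-\epsilon}\). The summand is then bounded by \(A^\hermRank \max\{\cdot\}\,\slowVar(n)^\hermRank n^{\epsilon\hermRank} k^{-\alpha\hermRank-\epsilon\hermRank}\). The exponent satisfies \(\alpha\hermRank+\epsilon\hermRank<1\) because \(\epsilon<\frac1\hermRank-\alpha\). Comparing the sum with an integral gives
\[
\sum_{k\le N} k^{-(\alpha+\epsilon)\hermRank}\lesssim \frac{N^{1-(\alpha+\epsilon)\hermRank}}{1-(\alpha+\epsilon)\hermRank} = \frac{N^{2\hurst-1-\epsilon\hermRank}}{2\hurst-1-\epsilon\hermRank}.
\]
Multiplying by \(N+1\lesssim n(t-s)\) and by \(n^{-2\hurst}\slowVar(n)^{-\hermRank}\), all powers of \(n\) cancel, since \(n^{\epsilon\hermRank}\cdot n^{2\hurst-\epsilon\hermRank}\cdot n^{-2\hurst}=1\). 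What remains is \((t-s)^{2\hurst-\epsilon\hermRank}\). Doing the comparison as a double integral \(\int_0^N\int_0^N\abs{x-y}^{-(\alpha+\epsilon)\hermRank}\) instead produces exactly the denominator \((2\hurst-\epsilon\hermRank)(2\hurst-1-\epsilon\hermRank)\) that appears in \(C\).

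\emph{Range \(k<n_0\), together with the diagonal.} Here \(\abs{\rho(k)}\le 1\), so these terms contribute at most \(c\,n_0 (N+1)\lesssim n(t-s)\). After normalising, this is \(n^{1-2\hurst}\slowVar(n)^{-\hermRank}(t-s)\). I would bound \(\slowVar(n)^{-\hermRank}\le (A/(\slowVar(n_0)n_0^\epsilon))^\hermRank n^{\epsilon\hermRank}\) by Potter, exactly as in \eqref{eq: slow var bound 1}. This gives the factor \(n^{1-2\hurst+\epsilon\hermRank}(t-s)\). Since \(2\hurst-\epsilon\hermRank-1>0\) and \(n\ge (t-s)^{-1}\), we get \(n^{-(2\hurst-1-\epsilon\hermRank)}\le (t-s)^{2\hurst-1-\epsilon\hermRank}\), so the whole term is \(\le (t-s)^{2\hurst-\epsilon\hermRank}\). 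The numerical factor \(15\) in \(C\) would come from collecting the crude constants, such as \(N+1\le 2n(t-s)\), the factor 2 from symmetry, and \(n_0\) absorbed via \(n_0^\epsilon\).

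\textbf{Main obstacle.} The delicate step is applying Potter's bound uniformly in the first range. The ratio \(k/n\) ranges over \([n_0/n, 2T]\), so the two-sided Potter inequality is needed, and both the small-\(k\) side and the \(T\)-dependence (the factor \(\max\{T^{2\epsilon\hermRank},1\}\)) must be tracked carefully. One must also check that discarding \(k<n_0\) costs only the power \(n^{\epsilon\hermRank}\), which is absorbed using \(n(t-s)\ge1\). That absorption is precisely where the hypothesis \(t-s\ge\frac1n\) enters.
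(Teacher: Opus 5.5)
Your argument is correct, but it handles the small lags differently from the paper.

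\textbf{The paper's route.} It keeps the triangular form \((N+1)+2\sum_{i=1}^{N}\sum_{l=1}^{i}\abs{\rho(l)}^q\) and discards the at most two outer indices \(i\ge\floor{n(t-s)}\) using \(\abs{\rho}\le 1\). After that, every remaining lag satisfies \(l\le n(t-s)\), which is where the factor \(\max\set{T^{2\epsilon},1}\) comes from. It then applies the \emph{extended} Potter bound, Lemma~\ref{lem: potter bound} with \(a=1\), to \emph{every} lag \(l\ge 1\), including \(l<x_0\). Handling small lags is exactly what that lemma, and the requirement that \(\slowVar\) be bounded on compacts, is for.

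\textbf{Your route.} You cut at the fixed lag \(n_0\), bound the lags \(k<n_0\) by \(\abs{\rho(k)}\le 1\), and merge them with the diagonal. Potter is then only used with both arguments \(\ge x_0\). So the textbook Potter bound suffices, and the extension to small arguments is unnecessary for this lemma.

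\textbf{The constant.} The price is in the constant, and your final bookkeeping there is not right.
\begin{itemize}
\item The small-lag block is of size up to \((2n_0-1)(N+1)\). So the second term of your constant carries an extra factor of order \(n_0\) in front of \(\bigl(A/(\slowVar(n_0)n_0^\epsilon)\bigr)^q\). This cannot be ``absorbed via \(n_0^\epsilon\)'': it leaves \(n_0^{1-\epsilon q}\), which grows with \(n_0\).
\item From \(k/n\le 2T\) you get \(\max\set{(2T)^{2\epsilon},1}\), not \(\max\set{(2T)^{\epsilon},1}\).
\item In general only \(N+1\le n(t-s)+2\le 3n(t-s)\) holds, not \(N+1\le 2n(t-s)\).
\end{itemize}
Your route therefore gives the inequality with an \(n,s,t\)-independent constant, but not the displayed \(C\) with the Potter constant \(A\). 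You can reach the displayed form only by enlarging \(A\), not by collecting numerical factors into the \(15\). Enlarging \(A\) is legitimate: \(A\) is merely asserted to exist, \(C\) is increasing in \(A\), and Lemma~\ref{lem: tightness criterion check for regularly varying covariance} only uses that the constant is independent of \(n,s,t\).
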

\begin{proof}
    By symmetry of the correlation we have
    \begin{align}
        \nonumber
        \sum_{i, j =0 }^{\floor{nt}-\floor{ns}} \abs{\rho(i-j)}^\hermRank
        &= \sum_{i = 0}^{\floor{nt}-\floor{ns}} \abs{\rho(0)}^\hermRank
        + 2\sum_{i= 1}^{\floor{nt}-\floor{ns}} \sum_{j = 0}^{i-1} \abs{\rho(i-j)}^\hermRank
        \\
        \label{eq: initial sum bound}
        \overset{l=i-j}&\le 
        (\floor{n t} - \floor{ns}+1)
        + 2\sum_{i=1}^{\floor{n t}-\floor{ns}} \sum_{l = 1}^{i} \abs{\rho(l)}^\hermRank.
    \end{align}
    Using the Cauchy-Schwarz bound on the correlation function to get \(\abs{\rho(l)} \le \rho(0) = 1\),
    we may remove some of the larger indices \(k\) from the double sum
    \[
        \sum_{i=1}^{\floor{n t}-\floor{ns}} \sum_{l=1}^{i} \abs{\rho(l)}^\hermRank
        \le \sum_{i=1}^{\floor{n(t-s)}-1} \sum_{l = 1}^{i} \abs{\rho(l)}^\hermRank
        + \underbrace{
            \sum_{i=\floor{n(t-s)}}^{\floor{nt}-\floor{ns}}
            \underbrace{
                \sum_{l = 1}^{i}
                \underbrace{\abs{\rho(l)}^\hermRank}_{\le 1}
            }_{\le \floor{nt}-\floor{ns} \mathrlap{\le n(t-s) +1}}
        }_{\le 2(n(t-s) + 1)}.
    \]
    Using \(\floor{nt}-\floor{ns} \le n(t-s) + 1\) on
    the first term of equation \eqref{eq: initial sum bound} again
    and merging it with the bound on the large indices above we get
    \[
        \frac{n^{-2\hurst}}{\slowVar(n)^\hermRank} \sum_{i, j = 0}^{\floor{n t}-\floor{n s}} \abs{\rho(i-j)}^\hermRank
        \le 2\underbrace{\frac{n^{-2\hurst}}{\slowVar(n)^\hermRank}
        \sum_{k=1}^{\floor{n(t-1)}-1} \sum_{l = 1}^{k} \abs{\rho(l)}^\hermRank}_{\eqcolon\mathrm{(I)}}
        + 5\underbrace{\frac{n^{-2\hurst}}{\slowVar(n)^\hermRank}(n(t-s) + 2)}_{\eqcolon\mathrm{(II)}}.
    \]
    \begin{steps}
        \item \emph{Bound for \(\mathrm{(I)}\):} Recall that \(\rho\) is regularly varying and specifically \(\abs{\rho(l)} = \abs{l}^{-\alpha}\slowVar(l)\).
        To cancel \(\slowVar(l)\) with the \(\slowVar(n)\) in denominator of the scaling 
        factor we use the Potter bound in Lemma~\ref{lem: potter bound} to get
        for all \(n\ge n_0\) and \(1 \le l \le k \le n(t-s)\)
        \begin{align}
            \label{eq: small l potter bound}
            \abs[\big]{\tfrac{\slowVar(l)}{\slowVar(n)}}
            &\le A \max\set[\Big]{\bigl(\tfrac{l}{n}\bigr)^\epsilon, \bigl(\tfrac{l}{n}\bigr)^{-\epsilon}}
            \\
            \nonumber
            &= A \max\set[\Big]{\bigl(\tfrac{l}{n(t-s)}\bigr)^\epsilon (t-s)^{2\epsilon}, \bigl(\tfrac{l}{n(t-s)}\bigr)^{-\epsilon}}(t-s)^{-\epsilon}
            \\
            \nonumber
            &\le A\max\set{T^{2\epsilon}, 1} \bigl(\tfrac{l}{n(t-s)}\bigr)^{-\epsilon} (t-s)^{-\epsilon}
            & \bigl(\substack{(t-s)^{2\epsilon} \le T^{2\epsilon}\\l\le n(t-s)}\bigr)
            \\
            \nonumber
            &= A\max\set{T^{2\epsilon},1} l^{-\epsilon} n^\epsilon.
        \end{align}
        This implies
        \[
            \abs{\rho(l)}
            = \abs{l}^{-\alpha}\slowVar(l)
            \le A \max\set{T^{2\epsilon},1} n^\epsilon \slowVar(n)l^{-(\alpha+\epsilon)} 
        \]
        Since \(\epsilon < \frac1\hermRank - \alpha\)
        we have that \(-(\alpha + \epsilon)\hermRank \in (-1, 0)\) and consequently
        \begin{align*}
            \frac{n^{-2\hurst}}{\slowVar(n)^\hermRank}
            \sum_{k=1}^{\floor{n(t-s)}-1}
            \sum_{l=1}^{k} \abs{\rho(l)}^\hermRank
            &\le (A \max\set{T^{2\epsilon},1})^\hermRank n^{\epsilon\hermRank-2\hurst}
            \sum_{k=1}^{\floor{n(t-s)}-1}
            \sum_{l=1}^{k} l^{-(\alpha+\epsilon)\hermRank}
            \\
            \overset{\eqref{eq: sum bound}}&\le
            (A \max\set{T^{2\epsilon},1})^\hermRank
            n^{\epsilon\hermRank-2\hurst}
            \sum_{k=1}^{\floor{n(t-s)}-1}
            \frac{k^{1-(\alpha+\epsilon)\hermRank}}{1-(\alpha+\epsilon)\hermRank} ,
            \\
            \overset{\eqref{eq: sum bound}}&\le
            (A \max\set{T^{2\epsilon},1})^\hermRank
            n^{\epsilon\hermRank-2\hurst}
            \tfrac{\floor{n(t-s)}^{2-(\alpha+\epsilon)\hermRank}}{(2-(\alpha+\epsilon)\hermRank)(1-(\alpha + \epsilon)\hermRank)}
            \\
            \overset{2\hurst = 2-\alpha\hermRank}&\le
            \tfrac{A^\hermRank \max\set{T^{2\epsilon\hermRank},1}}{(2\hurst-\epsilon \hermRank)(2\hurst - 1-\epsilon\hermRank)}
            (t-s)^{2\hurst-\epsilon\hermRank},
        \end{align*}
        where we use
        \begin{equation}
            \label{eq: sum bound} 
            \sum_{x=1}^k x^\gamma
            \le \begin{cases}
                \int_0^k x^\gamma dx = \frac{k^{1+\gamma}}{1+\gamma} & \text{if } \gamma \in (-1, 0)
                \\
                \int_1^{k+1} x^\gamma dx \le \frac{(k+1)^{1+\gamma}}{1+\gamma} & \text{if } \gamma \ge 0.
\end{cases}
        \end{equation}

        \item \emph{Bound for \(\mathrm{(II)}\):} Using the
        Potter bound \(\frac{\slowVar(n_0)}{\slowVar(n)}\le A (\frac{n}{n_0})^\epsilon\) (Lemma \ref{lem: potter bound}) 
        for \(n\ge n_0\) yields
        \begin{align*}
            \frac{n^{-2\hurst}}{\slowVar(n)^\hermRank}(n(t-s) + 2)
            &\le \bigl(\tfrac{A}{\slowVar(n_0)n_0^\epsilon}\bigr)^\hermRank
            n^{\epsilon \hermRank - 2\hurst}(n(t-s) + 2)
            \\
            &= \bigl(\tfrac{A}{\slowVar(n_0)n_0^\epsilon}\bigr)^\hermRank
            \Bigl( \bigl(\tfrac1n\bigr)^{2\hurst - 1 - \epsilon\hermRank}(t-s) + 2\bigl(\tfrac1n\bigr)^{2\hurst - \epsilon\hermRank}\Bigr)
            \\
            &\le 3\bigl(\tfrac{A}{\slowVar(n_0)n_0^\epsilon}\bigr)^\hermRank (t-s)^{2\hurst - \epsilon \hermRank},
        \end{align*}
        using \(\frac1n \le t-s\) and \(2\hurst - 1 - \epsilon \hermRank >0\) in the last inequality, which follows from
        \(\epsilon < \frac1\hermRank - \alpha\) and therefore
        \(
            1+ \epsilon \hermRank < 2-\alpha\hermRank = 2\hurst.
        \)
    \end{steps}
    Together, these bounds result in the claim
    \[
        \frac{n^{-2\hurst}}{\slowVar(n)^\hermRank} \sum_{i, j = 0}^{\floor{n t}-\floor{n s}} \abs{\rho(i-j)}^\hermRank
        \le \bigl(
            \tfrac{A^\hermRank \max\set{T^{2\epsilon\hermRank},1}}{(2\hurst-\epsilon \hermRank)(2\hurst - 1-\epsilon\hermRank)}
            + 15\bigl(\tfrac{A}{\slowVar(n_0)n_0^\epsilon}\bigr)^\hermRank
        \bigr) (t-s)^{2\hurst - \epsilon \hermRank}.
        \qedhere
    \]
\end{proof}

The following Potter bound is a slight extension of the
the usual Potter bounds \citep[e.g.][Thm.~1.5.6]{binghamRegularVariation1987},
where one must choose both variables larger than some unknown threshold or
assume that \(\slowVar\) is bounded away from zero and infinity on every compact subset of \([0,\infty)\).
As we consider \(\frac{\slowVar(l)}{\slowVar(n)}\) for small \(l\) in \eqref{eq: small l potter bound} we must relax this
requirement.

\begin{lemma}[Potter bound]
    \label{lem: potter bound}
    Let \(\slowVar:(0, \infty) \to [0,\infty)\) be a slowly varying function 
    and let \(a>0\). If \(\slowVar\) is bounded
    on \([a, b]\) for every \(b>a\), then for any \(\epsilon >0\) there exists
    \(x_0 = x_0(\epsilon, \slowVar) >0\) and \(A=A(x_0, \slowVar, a)\) such that
    \(\slowVar\) is positive on \([x_0, \infty)\) and
    \[
        \abs[\big]{\tfrac{\slowVar(y)}{\slowVar(x)}}
        \le A \max\set[\Big]{\bigl(\tfrac{y}{x}\bigr)^\epsilon, \bigl(\tfrac{y}{x}\bigr)^{-\epsilon}}
        \qquad \text{for all \(x \ge x_0\) and \(y \ge a\)}.
    \]
\end{lemma}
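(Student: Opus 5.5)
We reduce to the classical Potter bound \citep[Thm.~1.5.6]{binghamRegularVariation1987}. Fix \(\epsilon>0\). That theorem, applied with some constant \(A_0>1\) (say \(A_0=2\)), yields a threshold \(x_0=x_0(\epsilon,\slowVar)\) with the following two properties. First, \(\slowVar\) is positive on \([x_0,\infty)\); positivity on some tail \([c,\infty)\) is part of our definition of slow variation, so we enlarge \(x_0\) beyond \(c\) if necessary. Second, the bound
\[
    \tfrac{\slowVar(y)}{\slowVar(x)} \le A_0 \max\set{(y/x)^\epsilon,(y/x)^{-\epsilon}}
    \qquad\text{for all } x,y\ge x_0
\]
holds. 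This already proves the claim whenever \(y\ge x_0\). We may also assume \(x_0> a\); if not, the case \(y<x_0\) below is vacuous.

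It remains to treat \(a\le y< x_0\le x\). The key point is to control \(\slowVar(x)\) from below using only the value at the fixed point \(x_0\). Applying the classical bound with the roles \((y,x)=(x,x_0)\), i.e.\ to \(\slowVar(x_0)/\slowVar(x)\), gives \(\slowVar(x_0)/\slowVar(x)\le A_0 (x/x_0)^\epsilon\) because \(x\ge x_0\). Hence
\[
    \tfrac{1}{\slowVar(x)} \le \tfrac{A_0}{\slowVar(x_0)}\bigl(\tfrac{x}{x_0}\bigr)^\epsilon .
\]
The numerator is handled by the assumption that \(\slowVar\) is bounded on compacts away from zero: \(M\coloneq\sup_{[a,x_0]}\slowVar<\infty\). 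Combining the two estimates,
\[
    \tfrac{\slowVar(y)}{\slowVar(x)} \le \tfrac{M A_0}{\slowVar(x_0)}\, x_0^{-\epsilon} x^{\epsilon}
    = \tfrac{M A_0}{\slowVar(x_0)} \bigl(\tfrac{y}{x_0}\bigr)^{\epsilon} \bigl(\tfrac{y}{x}\bigr)^{-\epsilon}
    \le \tfrac{M A_0}{\slowVar(x_0)} \bigl(\tfrac{y}{x}\bigr)^{-\epsilon},
\]
since \(y< x_0\). So the constant \(A\coloneq\max\{A_0, M A_0/\slowVar(x_0)\}\) works in all cases, and it depends only on \(x_0\), \(\slowVar\) and \(a\), as claimed. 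The absolute values in the statement are harmless because \(\slowVar\ge 0\).

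The only real subtlety is the middle step: the classical Potter bound requires both arguments to lie above the threshold. The trick is to compare \(x\) against the fixed anchor \(x_0\) rather than against \(y\), and to absorb the factor \((y/x_0)^{\epsilon}\le 1\) into the constant. We also need to be careful to choose \(x_0\) large enough that \(\slowVar(x_0)>0\), so that \(A\) is finite.
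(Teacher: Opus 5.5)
Your proof is correct and follows the paper's argument almost exactly. The paper also applies the classical Potter bound with constant \(2\) for \(x,y\ge x_0\). For \(a\le y<x_0\le x\) it likewise anchors at \(x_0\): it writes \(\slowVar(y)/\slowVar(x)=\bigl(\slowVar(y)/\slowVar(x_0)\bigr)\bigl(\slowVar(x_0)/\slowVar(x)\bigr)\), bounds the first factor using boundedness of \(\slowVar\) on \([a,x_0]\), and uses \(y\le x_0\) to pass from \((x/x_0)^\epsilon\) to \((y/x)^{-\epsilon}\), giving a constant equivalent to yours.
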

\begin{remark}[Lemma is applicable]
    The slowly varying function \(\slowVar\) from the regular variating correlation function \(\rho\)
    of Theorem \ref{thm: functional Dobrushin-Major-Taqqu} satisfies the requirement, as
    \(\rho\) is bounded (by convention) and \(\slowVar\) is therefore bounded on compact intervals.
\end{remark}

\begin{proof}
    By \citep[Thm.~1.5.6 (i)]{binghamRegularVariation1987} for any \(\epsilon>0\) there exists \(x_0 = x_0(\epsilon, \slowVar)\)
    such that
    \begin{equation}
        \label{eq: potter bound, large x}
        \tfrac{\slowVar(y)}{\slowVar(x)}
        \le 2\max\set[\Big]{\bigl(\tfrac{y}{x}\bigr)^\epsilon, \bigl(\tfrac{y}{x}\bigr)^{-\epsilon}}
        \qquad \text{for all \(x,y \ge x_0\)}.
    \end{equation}
    By definition of slowly varying functions we may assume \(x_0\) is sufficiently large for \(\slowVar\)
    to be positive on \([x_0, \infty)\). Observe that the choice
    \(
        A \coloneq 2\max\set{1, \sup_{y\in [a,x_0]} \abs{\frac{\slowVar(y)}{\slowVar(x_0)}}} < \infty
    \) implies that
    \eqref{eq: potter bound, large x} is the claim for \(y \ge x_0\) and for \(y \in [a, x_0]\) we also have
    for all \(x\ge x_0\)
    \[
        \abs[\big]{\tfrac{\slowVar(y)}{\slowVar(x)}}
        = \abs[\big]{\tfrac{\slowVar(y)}{\slowVar(x_0)}} \tfrac{\slowVar(x_0)}{\slowVar(x)}
        \overset{\eqref{eq: potter bound, large x}}\le A \max\set[\Big]{\bigl(\tfrac{x_0}{x}\bigr)^\epsilon, \bigl(\tfrac{x_0}{x}\bigr)^{-\epsilon}}
        = A \bigl(\tfrac{x_0}{x}\bigr)^{-\epsilon}
        \le A \bigl(\tfrac{y}{x}\bigr)^{-\epsilon}.
        \qedhere
    \]
\end{proof}

\subsection{Proof of Theorem \ref{thm: functional Breuer-Major}}

The finite dimensional convergence \(\hat\interpF^n \overset{\fdd}\to \bm\) is the well-known
Breuer-Major theorem \citep{breuerCentralLimitTheorems1983} with a modern
formulation in \citep[Thm.~7.2]{nourdinSelectedAspectsFractional2012}.
Continuous mapping yields \((\hat\interpF^n, \frac12(\hat\interpF^n)^2)
\overset{\fdd}\to (\sigma\bm, \frac{\sigma^2}2 \bm^2)\). And since the Stratonovich enhanced Brownian
motion is geometric, we have \citep[Sec.~3.3]{frizCourseRoughPaths2020}
\[
    \tfrac12(\bm_{t,s})^2 = \tfrac12(\bm_t - \bm_s)^2
    = \int_s^t \bm_r \circ d\bm_r = \mathbb{\bm}_{s,t}^{\mathrm{Strat}}.
\]
We now simply check Kolmogorov's criterion for tightness to finish the proof.
The rough paths version of Kolmogorov's criterion is given in
\citep[Thm.~3.10]{frizCourseRoughPaths2020}. The following lemma
proves the requirements are met. The proof is anologous to Lemma \ref{lem: tightness criterion check for regularly varying covariance}
without the work that the slowly varying function caused.

\begin{lemma}[Tightness criterion check for summable covariance]
    \label{lem: tightness criterion check for summable covariance}
    Assume that \(\rho\) satisfies the summability assumption of Theorem \ref{thm: functional Breuer-Major}.
    Then \(\norm{\hat\interpF_t^n - \hat\interpF_s^n}_{L^p(\Omega)}
    \le C_{\text{tight}} \abs{t-s}^{\frac12}\)
    and consequently we also have \(\norm[\big]{\bigl(\tfrac12(\hat\interpF^n_{s,t})^2\bigr)^\frac12}_{L^p(\Omega)}\le \frac{C_{\text{tight}}}{\sqrt{2}} \abs{t-s}^{\frac12}\).
\end{lemma}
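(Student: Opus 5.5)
The plan mirrors the proof of Lemma~\ref{lem: tightness criterion check for regularly varying covariance}: split according to whether \(\abs{t-s}<\frac1n\) or \(\abs{t-s}\ge\frac1n\), assume \(t>s\) without loss of generality, and take \(C_{\text{tight}}\) to be the maximum of the two resulting constants. Since \(\hat\interpF^n=\frac1{\sqrt n}\interpF^n\), the slowly varying function plays no role and no Potter bound is needed.

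\emph{Case \(\abs{t-s}<\frac1n\).} Lemma~\ref{lem: small increment bound} gives
\[
\norm{\hat\interpF^n_t-\hat\interpF^n_s}_{L^p(\Omega)}\le 2\sqrt n\,\abs{t-s}\,\norm{\feature(X_0)}_{L^p(\Omega)}.
\]
Since \(\sqrt n<\abs{t-s}^{-1/2}\) in this case, we get \(\sqrt n\,\abs{t-s}\le\abs{t-s}^{1/2}\). This yields the bound with constant \(2\norm{\feature(X_0)}_{L^p(\Omega)}\), which is finite because \(\feature\in L^p(\gamma)\).

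\emph{Case \(\abs{t-s}\ge\frac1n\).} Here I apply Lemma~\ref{lem: large increment correlation bound}, which is valid since \(p>6\ge 2\). It leaves the task of bounding \(\frac1n\sum_{i,j=0}^{N}\abs{\rho(i-j)}^\hermRank\) with \(N\coloneq\floor{nt}-\floor{ns}\). For fixed \(i\), the inner sum over \(j\) is at most \(\sum_{m\in\Z}\abs{\rho(m)}^\hermRank\eqcolon R<\infty\) by the summability assumption. Hence the double sum is at most \((N+1)R\le(n(t-s)+2)R\le 3n(t-s)R\), where the last step uses \(n(t-s)\ge1\). After dividing by \(n\) and taking square roots, we obtain
\[
\norm{\hat\interpF^n_t-\hat\interpF^n_s}_{L^p(\Omega)}\le C_{p,\feature}\sqrt{3R}\,\abs{t-s}^{1/2}.
\]
This case is the main step, but it is easy: the summability replaces the whole of Lemma~\ref{lem: bound on the correlation sum}. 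The only care needed is the bookkeeping \(N+1\le 3n(t-s)\), which relies on the case assumption.

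\emph{Second level.} This is immediate from the first. We have \(\bigl(\tfrac12(\hat\interpF^n_{s,t})^2\bigr)^{1/2}=\tfrac1{\sqrt2}\abs{\hat\interpF^n_{s,t}}\). Its \(L^p(\Omega)\)-norm is therefore \(\tfrac1{\sqrt2}\norm{\hat\interpF^n_t-\hat\interpF^n_s}_{L^p(\Omega)}\le\frac{C_{\text{tight}}}{\sqrt2}\abs{t-s}^{1/2}\), which is exactly the moment condition required by the rough Kolmogorov criterion \citep[Thm.~3.10]{frizCourseRoughPaths2020}.
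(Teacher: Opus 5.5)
Your proposal is correct and follows the paper's proof essentially line by line. You use the same split at \(\abs{t-s}=\frac1n\): Lemma~\ref{lem: small increment bound} with \(\sqrt n\,\abs{t-s}\le\abs{t-s}^{1/2}\) in the first case, and Lemma~\ref{lem: large increment correlation bound} with the summability bound \((N+1)R\le 3n(t-s)R\) in the second (the paper's constant is exactly \(C=3R\)), while your identity \(\bigl(\tfrac12 x^2\bigr)^{1/2}=\abs{x}/\sqrt2\) just makes explicit the second-level step the paper calls ``consequently''.
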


\begin{proof}
\begin{casebycase}
    \item \textbf{\(\abs{t-s} < \frac1n\):}
    With Lemma \ref{lem: small increment bound} we get
    \[
        \norm{\hat\interpF_t^n - \hat\interpF_s^n}_{L^p(\Omega)}
        = \norm[\big]{\tfrac1{\sqrt{n}}\bigl(\interpF_t^n - \interpF_s^n\bigr)}_{L^p(\Omega)}
        \le 2\norm{\feature(X_0)}_{L^p(\Omega)} \sqrt{n}\abs{t-s}
        \le C_{\text{tight}}^{(1)} \abs{t-s}^{\frac12}
    \]
    using \(n < \frac1{\abs{t-s}}\)  and \(C_{\text{tight}}^{(1)}\coloneq 2\norm{\feature(X_0)}\) in the last inequality

    \item \textbf{\(\abs{t-s} \ge \frac1n\):} Assume \(t\ge s\) without loss of generality
    and apply Lemma \ref{lem: large increment correlation bound} to get
    \[
        \norm{\hat\interpF_t^n - \hat\interpF_s^n}_{L^p(\Omega)}
        = \norm[\big]{\tfrac1{\sqrt{n}}\bigl(\interpF_t^n - \interpF_s^n\bigr)}_{L^p(\Omega)}
        \!\overset{\text{Lem.~\ref{lem: large increment correlation bound}}}\le\! C_{p, \feature} \Bigl(\frac1n \sum_{i,j=\floor{n s}}^{\floor{n t}} \abs{\rho(i-j)}^\hermRank\Bigr)^{\frac12}
        \overset{\eqref{eq: correlation sum bound, summable case}}\le C_{\text{tight}}^{(2)} \abs{t-s}^{\frac12},
    \]
    where \(C_{p, \feature}\) is from Lemma \ref{lem: large increment
    correlation bound}. The choice \(C_{\text{tight}}^{(2)} \coloneq C_{p, \feature} \sqrt{C}\) with 
    \(C\coloneq 3\sum_{k\in \integer}\abs{\rho(k)}^\hermRank< \infty\) follows from
    \begin{equation}
        \label{eq: correlation sum bound, summable case}
        \sum_{i,j=0}^{\floor{n t}-\floor{n s}} \abs{\rho(i-j)}^\hermRank
        \le \sum_{i=0}^{\floor{n t}-\floor{n s}} \sum_{k \in \integer}\abs{\rho(k)}^\hermRank
        = \tfrac{C}3 \underbrace{(\floor{nt} - \floor{n s} + 1)}_{\le n(t-s) + 2}
        \le Cn(t-s),
    \end{equation}
    where we use \(\frac1n \le t-s\) and therefore \(2\le 2n(t-s)\) in the last inequality.
    \qedhere
\end{casebycase}
\end{proof}

\bibliographystyle{alpha}
\bibliography{pDOM,zotero-generated}

\end{document}